\newcommand{\coeffs}{\mathcal P}
\newcommand{\el}[1]{L_{{#1}}}
\newtheorem{theorem}{Theorem}[section]
\newtheorem*{theorem*}{Theorem}
\newtheorem{proposition}[theorem]{Proposition}%
\newtheorem{lemma}[theorem]{Lemma}
\newtheorem{remark}[theorem]{Remark}
\newtheorem{definition}[theorem]{Definition}
\numberwithin{equation}{section}
\begin{document}

\title{An Unexpected Class of 5+gon-free Line Patterns}
\author[1]{Milena Harned}
\author[2]{Iris Liebman}
\affil[1]{Girls' Angle, \textit{Email:} mdh2192@columbia.edu}
\affil[2]{Girls' Angle, \textit{Email:} iris.liebman@gmail.com}
\date{August 1, 2023}

\maketitle

\begin{abstract}
Let $S$ be a finite subset of ${\mathbb R}^2 \setminus (0,0)$. Generally, one would expect the pattern of lines $Ax + By = 1$, where $(A, B) \in S$ to contain polygons of all shapes and sizes. We show, however, that when $S$ is a rectangular subset of the integer lattice or a closely related set, no polygons with more than 4 sides occur. In the process, we develop a general theorem that explains how to find the next side as one travels around the boundary of a cell.
\end{abstract}


\section{Introduction}
Consider an arbitrary, finite, collection of lines of the form $Ax+By=1$, where $A$ and $B$ are integers, such as the $25$ lines shown in Figure~\ref{example_25}. These lines dissect the plane into regions, the bounded ones of which are convex polygons of various shapes and sizes. For example, the central polygon is a decagon, and there are several pentagons, some of which have been shaded to better illustrate their presence.

\begin{figure}[ht!]
    \centering
    \includegraphics[width=6cm]{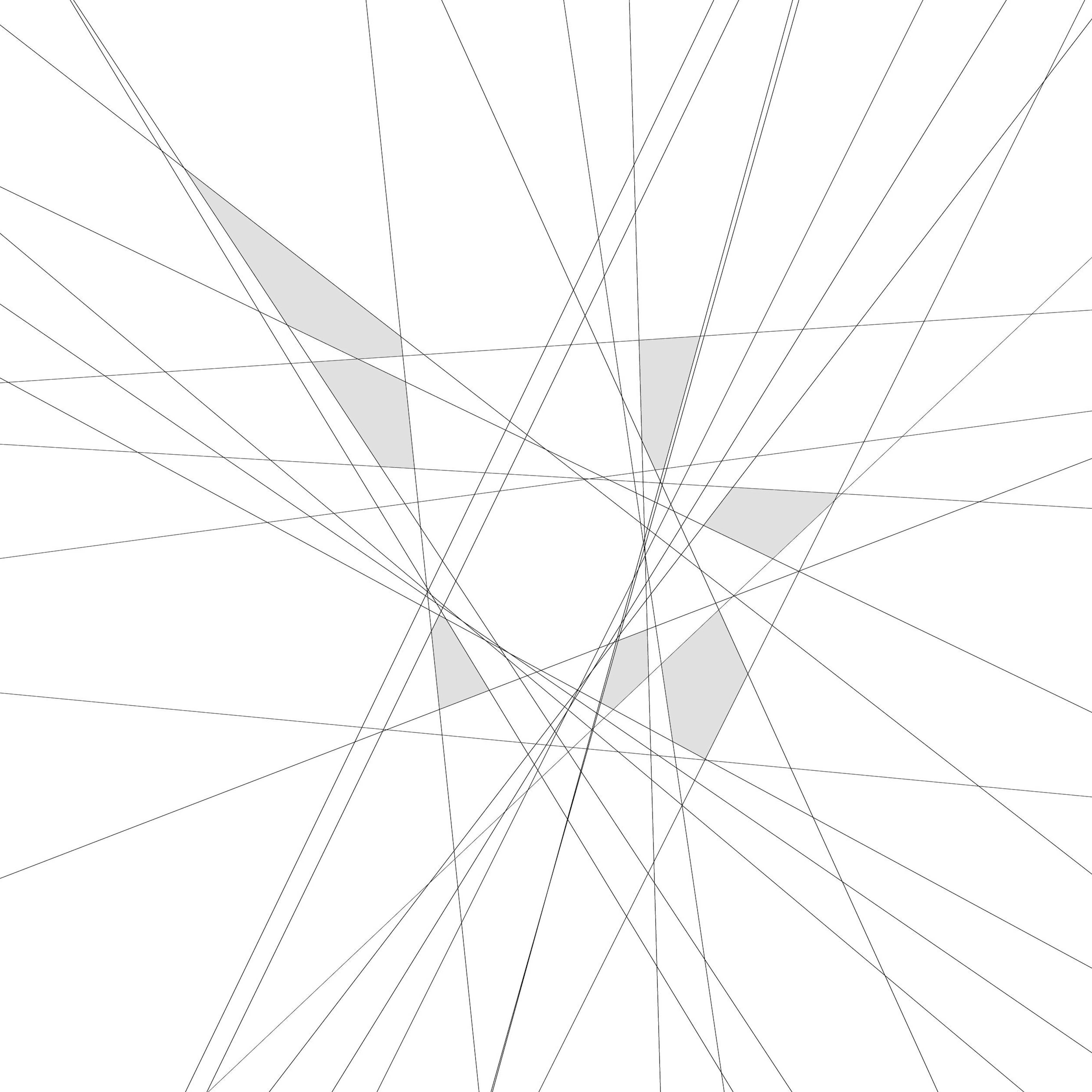}
    \caption{An arbitrary collection of 25 lines of the form $Ax + By = 1$ where $A$ and $B$ are integers. Some pentagons are shaded.}
    \label{example_25}
\end{figure}

Now consider the pattern formed by the $80$ lines of the form $Ax + By = 1$, where $A$ and $B$ are integers between $-4$ and $4$, inclusive. Here every polygon is either a triangle or quadrilateral! Figure~\ref{example_4} shows the first quadrant of this pattern. 

\begin{figure}[ht!]
    \centering
    \includegraphics[width=6cm]{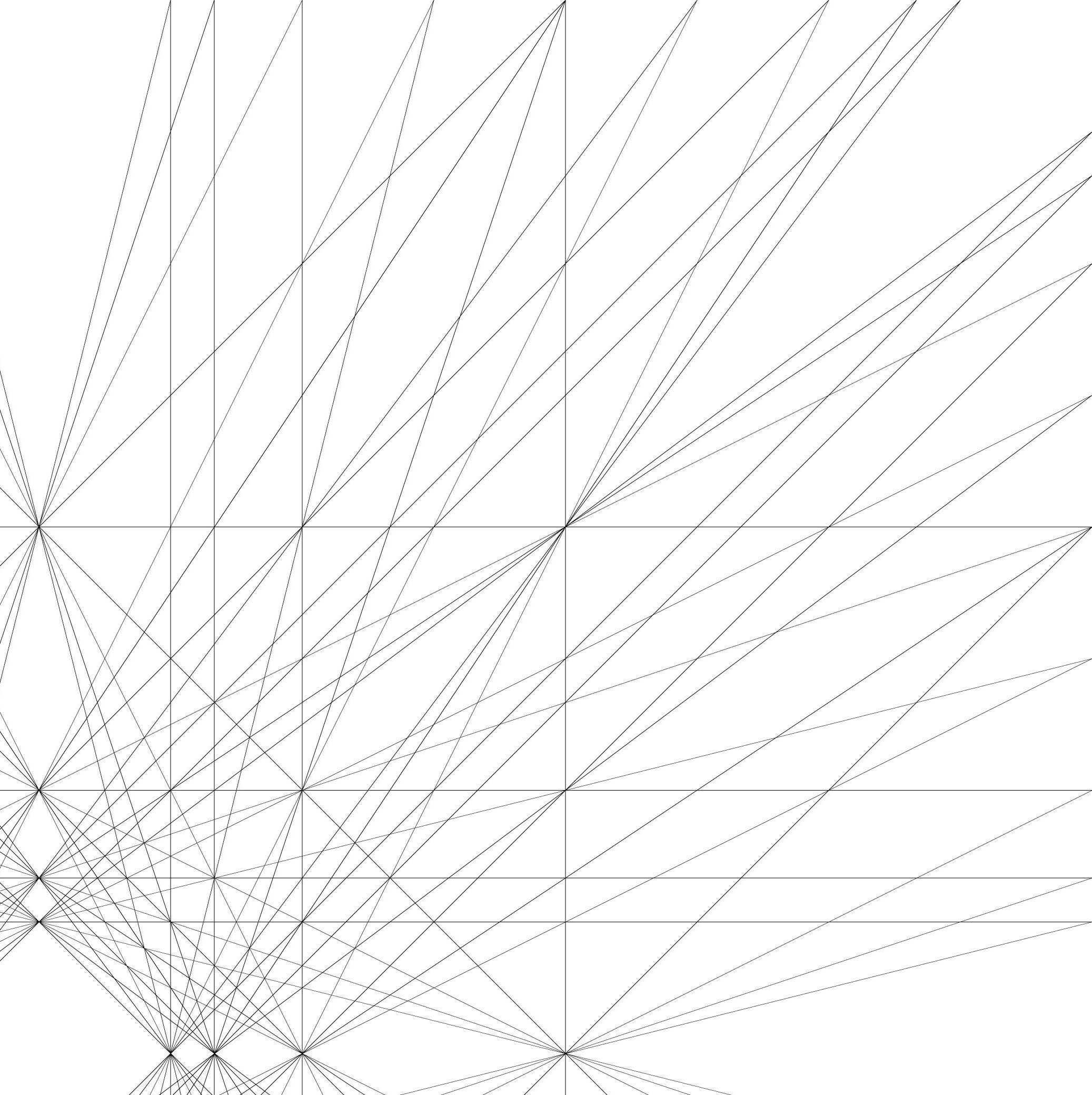}
    \caption{The 56 lines of the form $Ax + By = 1$, where $A$ and $B$ are integers between $-4$ and $4$, inclusive, that are visible in the first quadrant, extended slightly to show the quadrilaterals intersected by the $x$ and $y$ axes.}
    \label{example_4}
\end{figure}

We generalize this observation as follows:
\begin{theorem*} \label{main}
Let $S$ be a rectangular lattice (see Definition~\ref{rectanglelattice}) and consider the pattern of lines of the form $Ax + By = 1$, where $(A, B) \in S$. The bounded regions formed will all be triangles or quadrilaterals.
\end{theorem*}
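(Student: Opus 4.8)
The plan is to dualize. Using the polar map $p \mapsto \ell_p := \{x : \langle p,x\rangle = 1\}$ — an incidence-preserving involution between points $\neq(0,0)$ and lines missing the origin, which moreover preserves which side of a line the origin lies on — a bounded cell $C$ whose boundary, traversed once, runs along the lines $\ell_{p_1},\dots,\ell_{p_k}$ in this cyclic order is encoded by the cyclic word $p_1,\dots,p_k$ in $S$, with the vertices of $C$ dual to the connecting lines $\overline{p_1p_2},\dots,\overline{p_kp_1}$. Since the origin lies strictly on the ``$<1$'' side of every $\ell_p$, each edge of $C$ is a \emph{far edge} (origin on $C$'s side) or a \emph{near edge} (origin on the opposite side), and a short convexity argument shows the near edges form one contiguous arc of $\partial C$ and the far edges the complementary arc; if there are no near edges then $C$ is the polar body of $\operatorname{conv}(S)$, hence has as many sides as $\operatorname{conv}(S)$ has vertices.

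The structural heart is the ``next side'' rule. As $x$ moves along the open edge of $C$ on $\ell_{p_i}$, the dual line $D(x)$ rotates about the point $p_i$ (because $x\in\ell_{p_i}\iff p_i\in D(x)$), sweeping exactly those lines through $p_i$ that lie strictly between $\overline{p_{i-1}p_i}$ and $\overline{p_ip_{i+1}}$ on the side \emph{not} crossing the line $\overline{Op_i}$. Because interior points of an edge lie on no other arrangement line, none of these swept lines passes through any other point of $S$; equivalently $p_{i+1}$ is the \emph{first} point of $S$ met as a line is rotated about $p_i$ away from $\overline{p_{i-1}p_i}$ in the appropriate rotational sense, and the open double wedge at $p_i$ cut out by $\overline{p_{i-1}p_i}$ and $\overline{p_ip_{i+1}}$ contains no point of $S$ but $p_i$. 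I would isolate this as the general lemma and then read off, for any bounded cell, the chain of empty-double-wedge conditions $W_1,\dots,W_k$ placed at $p_1,\dots,p_k$.

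Now restrict to $S$ a rectangular lattice. After an invertible linear change of coordinates (an equivalence of arrangements) $S$ is an axis-parallel block of integer points, and the decisive extra structure is that the arrangement is a union of pencils in two ways: the lines $Ax+By=1$ with $A$ fixed all pass through $(1/A,0)$, and those with $B$ fixed all pass through $(0,1/B)$ (a zero row or column contributing a pencil of parallels). An elementary convexity fact gives that any pencil supplies at most two edges of a convex cell, and that if two edges of $C$ in one pencil meet at a vertex then that vertex is the pencil center and the two lines are consecutive in the pencil — so at such a vertex exactly one of $A,B$ changes, and only by $\pm1$. Classifying the vertices of $C$ into pencil-center vertices and ordinary ones (where the next-side lemma governs the move), I would then track how the pair $(A,B)$ evolves around $\partial C$ and argue, combining the two pencil decompositions with the empty-wedge conditions, the plentiful collinear triples of the lattice, and the position of the origin relative to the block, that the walk must close after at most four edges.

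The principal obstacle is precisely this closing argument. The pencil bound alone only gives about twice the number of rows, which is nowhere near $4$; the real point is that the two pencil structures together with the next-side empty-wedge conditions are rigid enough over a lattice to exclude a fifth side. I expect the fiddly parts to be the ordinary vertices, where both coordinates change and one must show the ``first lattice point met'' by the rotating line cannot be placed so as to lengthen the boundary; the near/far transition vertices (the tangency points from the origin); and the degenerate configurations — the origin lying on a lattice line, and the block touching a coordinate axis so that a pencil collapses to parallel lines. A natural bookkeeping device is to follow the signed increments of $A$ and of $B$ along $\partial C$ and show their partial sums leave no room for five steps.
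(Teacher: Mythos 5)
Your setup reproduces the paper's machinery: the polar correspondence between points of $S$ and lines, the near/far (origin-side) labelling of edges with the two labels forming complementary contiguous arcs, and the ``next side'' rule by rotating a dual line about $p_i$ until it first meets another point of $S$ are, respectively, the paper's coordinatization, its Lemma on the two sign changes of the sequence $D_k$, and its Theorem on finding $P_{k+2}$ from $P_k, P_{k+1}$. The cell containing the origin is also handled essentially as in the paper (its sides come from the at most four corners of the convex hull of $S$). Up to this point the proposal is sound and matches the paper's route.

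However, there is a genuine gap at exactly the step you flag as ``the principal obstacle,'' and it is the heart of the theorem. What is needed is the assertion that for a bounded cell not containing the origin, no \emph{three} consecutive edges carry the same origin-side label; combined with the two-sign-change fact this immediately caps the number of sides at four. Your proposed bookkeeping does not deliver this: the pencil decompositions through $(1/A,0)$ and $(0,1/B)$ with ``at most two edges per pencil'' bound nothing (each edge lies in one $A$-pencil and one $B$-pencil, so five or more edges are perfectly consistent with that constraint, as you concede), and ``tracking signed increments of $A$ and $B$'' is stated as an intention, not an argument. The paper closes the gap with a lattice-specific reflection argument: if two consecutive labels agree, then $P_{k+1}$ lies to the right of $P_k$ and the segment $\overline{P_kP_{k+1}}$ exhausts $S$ on its line; setting $P_k' = 2P_{k+1}-P_k \notin S$, one shows that any candidate first hit of the rotating line on the ``wrong'' half beyond $P_{k+1}$ would, by the axis-parallel regularity of the lattice (if $(x,y)\in S$ then so is $(x,y_k')$ or $(x_k',y)$), either be preceded by another lattice point or force, by reflection through $P_{k+1}$, a point of $S$ on the other half that is met no later; hence $P_{k+2}$ lies on the side that flips the label, with a separate treatment of the horizontal/vertical and boundary-corner cases. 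Some argument of this kind, using the \emph{uniform} spacing in an essential way, cannot be avoided: the paper exhibits a non-uniform $3\times 3$ array whose line pattern contains a pentagon, so your appeals to ``plentiful collinear triples'' and the two pencil structures are not yet a proof, and the proposal as written does not establish the theorem.
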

For a precise statement, see Theorem~\ref{SandT}.

The key observation is that if one walks around the boundary of one of these polygons that does not contain the origin, no three consecutive sides can have the origin on the same side (i.e., to the left or right). Since the origin switches to the opposite side of the sides of a convex polygon exactly twice per lap, no such polygon can have more than $4$ sides.

Theorem~\ref{SandT} is tight in the sense that it requires regularity of the rectangular lattice (see Remark~\ref{pentagon}). We also show that subsets of the lattice obtained by intersecting with a triangle do not necessarily enjoy this property by providing a counterexample in Section~\ref{conclusion}.

To prove Theorem~\ref{SandT}, we developed a general method for determining the next side of a polygon, Theorem~\ref{multiplepoints}, which is one of the main results of this paper. In more detail, suppose that we are travelling clockwise around the boundary of some polygon and $A_1x + B_1y=1$ and $A_2x+B_2y=1$ are lines that contain two of its consecutive sides. This theorem enables us to find the line $Ax+By=1$ which contains the next side of the polygon that we encounter. The proof is somewhat delicate because two intersecting lines can be consecutive sides of $4$ different polygons, so the lines themselves do not supply sufficient information to answer the question.

Line arrangements such as the ones we consider in this paper have been studied for decades. For example, Gr\"unbaum \cite{Grunbaum} provides an early survey and more recently, Lea\~nos, et al., \cite{Leanos2007} studied $5+$gon-free \textit{simple} arrangements. In \cite{Grunbaum}, line arrangements live in the projective plane and are treated combinatorially, with a focus on arrangements where no three lines are concurrent (simple) or where all regions are triangles (simplicial). Here we consider sets of lines that are, in general, neither simple nor simplicial, and we introduce a way to coordinatize these lines. Furthermore, we focus our study on $\mathbb{R}^2$ rather than the projective plane because we use notions of orientation, such as left, right, clockwise, and counterclockwise.
Note, however, that there is no great loss in generality in working in $\mathbb{R}^2$ or using our coordinatization since any finite set of lines can be translated so that none of the lines are the line at infinity or pass through the origin.

\section{Notation}
In this section we state the notation conventions we will use throughout the paper.

We wish to draw a careful distinction between an ordered pair thought of as a pair of coefficients versus as coordinates of a point in the Euclidean plane. For this reason, we define $\coeffs$ to be the plane of coefficients and $\mathbb{R}^2$ to be the Euclidean plane. Thus, if $(A, B) \in \coeffs$, we are thinking of $A$ and $B$ as coefficients in the linear equation $Ax + By = 1$. We think of the line that corresponds to the graph of $Ax + By = 1$ as living in $\mathbb{R}^2$. Specifically, we define $\coeffs$ to be $\mathbb{R}^2 \setminus \{(0, 0)\}$. Then for any $P = (A, B) \in \coeffs$, define $L_P$ to be the line in $\mathbb{R}^2$ defined by the equation $Ax + By = 1$. For any $S \subset \coeffs$, define ${\mathcal L}_S = \{L_P \mid P \in S \}$.

We think of $\coeffs$ as parameterizing the set of lines that do not pass through the origin of $\mathbb{R}^2$. Please note that while $\coeffs$ is closely related to the dual space of $\mathbb{R}^2$, it is not the dual space since we think of points in $\coeffs$ as specific lines, not linear functionals.

We denote by $O$ the origin.

Whenever we speak of ``left'', ``right'', ``clockwise'', or ``counterclockwise'', we mean it from the vantage of one standing on the plane with ``up'' oriented in accordance with the right-hand rule.
Let $P, Q \in \coeffs$. We say that ``Q is to the left of $P$'' if it is in the half plane to the left of $\overleftrightarrow{OP}$. Similarly, we say that ``$Q$ is to the right of $P$'' if it is in the half plane to the right of $\overleftrightarrow{OP}$.

Finally, when we refer to traveling ''to the right'' on a line $L$ that does not contain $O$, we mean to travel in the direction of the points that are to the right of a given point on $L$. Similarly, traveling ``to the left'' on $L$ means to move in the direction of points to the left of a given point on $L$.

\section{General Observations}

\subsection{Lines of the Form $Ax+By=1$}
We first collect some basic facts regarding lines of the form $Ax+By=1$ in a form that we need.
These facts are well-known and we omit their proofs.
For example, Proposition~\ref{slopedistance}(iv) follows immediately from Theorem~3.3.4(a) in \cite{lines}.

\begin{proposition} \label{slopedistance}
Let $P = (A, B) \in \coeffs$. Then
\renewcommand{\labelenumi}{\roman{enumi}}
\begin{enumerate}
\item The line through $O$ and $(A, B)$ in $\mathbb{R}^2$ is perpendicular to $L_P$.\footnote{We state it this way to emphasize that we are thinking of $\coeffs$ and $\mathbb{R}^2$ as different planes.}
\item The $x$-intercept of $L_P$ is $1/A$.
\item The $y$-intercept of $L_P$ is $1/B$.
\item The distance between $(A, B)$ in $\mathbb{R}^2$ and $O$ is the reciprocal of the distance between $L_P$ and $O$. 
\item If $A^2 + B^2 = 1$, then $(A, B)$ in $\mathbb{R}^2$ is on $L_P$.
\item If $A^2 + B^2 > 1$, then the line $L_P$ separates $(A, B)$ in $\mathbb{R}^2$ from $O$.
\item If $A^2 + B^2 < 1$, then $(A, B)$ in $\mathbb{R}^2$ and $O$ are on the same side of $L_P$.
\end{enumerate} 
\end{proposition}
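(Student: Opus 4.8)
The plan is to derive all seven items from two elementary tools: the description of $L_P$ as the zero set of the affine functional $f(x,y) = Ax + By - 1$, whose gradient is the constant vector $(A,B)$, together with the standard point-to-line distance formula. Throughout, I would use the convention that a point lies to a given side of $L_P$ exactly according to the sign of $f$ at that point, and the fact that $(A,B)$ is a normal vector of $L_P$ because $L_P$ is a level set of $f$.

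First I would handle (i): since $(A,B)$ is normal to $L_P$ while $(A,B)$ is simultaneously a direction vector of the line through $O$ and $(A,B)$ in $\mathbb{R}^2$, the two lines are perpendicular; equivalently, $(-B,A)$ spans $L_P$ and is orthogonal to $(A,B)$. Next, for (ii) and (iii), I would substitute $y = 0$ and then $x = 0$ into $Ax + By = 1$ and solve, noting that $P \neq (0,0)$ ensures at least one of $A, B$ is nonzero so that at least one intercept exists, and that each intercept exists precisely when its coefficient is nonzero. For (iv), the distance from $O$ to $\{f = 0\}$ is $|f(O)| / \sqrt{A^2 + B^2} = 1/\sqrt{A^2 + B^2}$, while the distance from $(A,B)$ to $O$ in $\mathbb{R}^2$ is $\sqrt{A^2 + B^2}$, and these are reciprocals.

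The remaining three items are all read off from the single computation $f(A,B) = A^2 + B^2 - 1$, together with $f(O) = -1 < 0$. If $A^2 + B^2 = 1$ then $f(A,B) = 0$, so $(A,B) \in L_P$, which is (v). If $A^2 + B^2 > 1$ then $f(A,B) > 0 > f(O)$, so $f$ has opposite signs at the two points and $L_P$ separates them, which is (vi). If $A^2 + B^2 < 1$ then $f(A,B)$ and $f(O)$ are both negative, so the two points lie in the same open half-plane $\{f < 0\}$, which is (vii).

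I do not expect a genuine obstacle here: each item is a one- or two-line verification once $f$ and the distance formula are set up, which is presumably why the authors cite these as well known. The only places deserving a word of care are the degenerate cases of (ii)--(iii) when $A$ or $B$ vanishes, and making explicit in (v)--(vii) that ``same side'' versus ``opposite sides'' is being interpreted as ``$f$ has the same sign'' versus ``$f$ has opposite signs'' at the two points --- the convention that will remain in force for the rest of the paper.
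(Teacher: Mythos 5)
Your proof is correct: the paper deliberately omits proofs of these facts as well known (citing a textbook for item (iv)), and your verification via the affine functional $f(x,y)=Ax+By-1$, its gradient $(A,B)$ as a normal vector, the point-to-line distance formula, and the sign of $f$ at $O$ and at $(A,B)$ is exactly the standard argument being invoked. Your care about the degenerate intercept cases and about fixing the sign convention for ``same side'' is appropriate and fully consistent with how the paper uses the proposition later.
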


\subsection{Linear Transformations}
Consider a non-degenerate linear transformation $M : \mathbb{R}^2 \to \mathbb{R}^2$.
We denote the transpose of $M$ by $M^t$.
We compute
$$1 = Ax + By =
\big(\begin{smallmatrix}
  A\\
  B
\end{smallmatrix}\big)^t
\big(\begin{smallmatrix}
  x\\
  y
\end{smallmatrix}\big)
=
\big(\begin{smallmatrix}
  A\\
  B
\end{smallmatrix}\big)^t
M^t (M^t)^{-1}
\big(\begin{smallmatrix}
  x\\
  y
\end{smallmatrix}\big)
=
(M \big(\begin{smallmatrix}
  A\\
  B
\end{smallmatrix}\big))^t
(M^t)^{-1}
\big(\begin{smallmatrix}
  x\\
  y
\end{smallmatrix}\big).
$$
Thus, if $(x, y)$ is a point on the line $Ax + By = 1$, then $(M^t)^{-1}
\big(\begin{smallmatrix}
  x\\
  y
\end{smallmatrix}\big)$
is a point on the line
$A^\prime x + B^\prime y = 1$, where
$$
\big(\begin{smallmatrix}
  A^\prime \\
  B^\prime
\end{smallmatrix}\big)
=
M
\big(\begin{smallmatrix}
  A\\
  B
\end{smallmatrix}\big).
$$
In other words, we have

\begin{proposition} \label{lineartransformations}
Let $M : \mathbb{R}^2 \to \mathbb{R}^2$ be a non-degenerate linear transformation and let $S \subset \coeffs$. Then
$${\mathcal L}_{MS} = \{(M^t)^{-1}L_P ~\vert~ P \in S\},$$
where $MS = \{ MC ~\vert~ C \in S \}$.
\end{proposition}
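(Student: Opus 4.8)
The plan is to verify Proposition~\ref{lineartransformations} by simply unwinding the definitions and citing the computation that immediately precedes the statement. The claim has two inclusions, but because $M$ is non-degenerate, the map $S \mapsto MS$ is a bijection on $\coeffs$ (note $M(0,0) = (0,0)$, so $MS \subset \coeffs$ whenever $S \subset \coeffs$), and it suffices to show that for each $P \in S$ the line $L_{MP}$ equals $(M^t)^{-1} L_P$ as a subset of $\mathbb{R}^2$. Here $(M^t)^{-1} L_P$ denotes the image of the line $L_P$ under the linear map $(M^t)^{-1} : \mathbb{R}^2 \to \mathbb{R}^2$; this is well-defined and is again a line since $(M^t)^{-1}$ is non-degenerate.

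First I would fix $P = (A, B) \in S$ and set $(A', B')^t = M (A, B)^t$, so that $MP = (A', B')$ and $L_{MP}$ is by definition the line $A'x + B'y = 1$. The preceding display shows that for any $(x,y)$ with $Ax + By = 1$, the point $(M^t)^{-1}(x,y)^t$ satisfies $A'x + B'y = 1$; that is, $(M^t)^{-1} L_P \subseteq L_{MP}$. For the reverse inclusion, I would run the same computation backwards: if $(u, v)$ lies on $L_{MP}$, i.e. $A'u + B'v = 1$, then writing $(x,y)^t = M^t (u,v)^t$ the identity
$$
1 = (A', B')
\big(\begin{smallmatrix} u \\ v \end{smallmatrix}\big)
= (M(A,B)^t)^t (u,v)^t
= (A, B) M^t (u,v)^t
= (A, B) \big(\begin{smallmatrix} x \\ y \end{smallmatrix}\big)
= Ax + By
$$
shows $(x, y) \in L_P$, and $(M^t)^{-1}(x,y)^t = (u,v)^t$, so $(u,v) \in (M^t)^{-1} L_P$. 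Combining the two inclusions gives $L_{MP} = (M^t)^{-1} L_P$.

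Finally I would assemble the set equality. Since $P \mapsto MP$ is a bijection from $S$ onto $MS$, we have ${\mathcal L}_{MS} = \{ L_Q \mid Q \in MS \} = \{ L_{MP} \mid P \in S \} = \{ (M^t)^{-1} L_P \mid P \in S \}$, which is the assertion. There is no real obstacle here: the only points requiring a word of care are that non-degeneracy of $M$ is what makes $(M^t)^{-1}$ exist and keeps $MS$ inside $\coeffs$, and that the symbol $(M^t)^{-1} L_P$ must be read as the pointwise image of the line rather than anything involving the coefficient plane $\coeffs$. Everything else is the transpose-shuffling already carried out in the text.
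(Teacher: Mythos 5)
Your proof is correct and follows essentially the same route as the paper, which treats the transpose computation displayed just before the proposition as the proof itself; you merely spell out the reverse inclusion (via invertibility of $M^t$) that the paper leaves implicit. No gaps.
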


\subsection{Multiple Lines of the Form $Ax+By=1$}

\begin{proposition} \label{parallellines}
A line in $\coeffs$ that passes through the origin corresponds to a family of parallel lines in $\mathbb{R}^2$.
\end{proposition}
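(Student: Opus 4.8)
The plan is to parametrize the line in $\coeffs$ and simply read off the resulting equations. Let $\ell \subset \coeffs$ be a line through the origin, and fix a nonzero vector $(A_0, B_0)$ spanning it, so that $\ell = \{ t(A_0, B_0) \mid t \in \mathbb{R},\ t \neq 0 \}$ — recall that $\coeffs$ excludes $(0,0)$, so the origin of $\coeffs$ is not itself a point of $\ell$. The first step is to note that for $P = t(A_0, B_0)$ with $t \neq 0$, the line $L_P$ is $tA_0 x + t B_0 y = 1$, equivalently $A_0 x + B_0 y = 1/t$. The second step is the observation that as $t$ ranges over the nonzero reals, so does $1/t$; hence $\{L_P \mid P \in \ell\}$ is exactly the set of lines $A_0 x + B_0 y = c$ with $c \neq 0$. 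Since all of these share the normal direction $(A_0, B_0)$, they are mutually parallel, which is the claim.

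An alternative, coordinate-free argument — which I would likely record instead, as it is shorter — is to invoke Proposition~\ref{slopedistance}(i): for every $P \in \ell$, the line $\overleftrightarrow{OP}$ in $\mathbb{R}^2$ is perpendicular to $L_P$. But every $P \in \ell$ satisfies $\overleftrightarrow{OP} = \ell$, so each $L_P$ is perpendicular to the single fixed line $\ell$, and two lines perpendicular to a common line are parallel (or equal). Taking this over all pairs $P, P' \in \ell$ shows $\mathcal{L}_\ell$ is a family of mutually parallel lines.

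There is essentially no real obstacle here; the only things that warrant a moment's care are that $\overleftrightarrow{OP}$ is well-defined precisely because $P \neq O$ in $\coeffs$, and that the correspondence $P \mapsto L_P$ misses exactly one member of the parallel family, namely $A_0 x + B_0 y = 0$, which passes through the origin and so equals no $L_P$. This omission is exactly consistent with the deleted point of $\coeffs$ and with the convention that $\coeffs$ parametrizes only lines avoiding $O$; I would include this sharpened description of the family if it is needed downstream, but it is not required for the statement as given.
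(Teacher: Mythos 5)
Your proof is correct, and your preferred coordinate-free argument via Proposition~\ref{slopedistance}(i) is exactly the paper's own one-line proof; the explicit parametrization $A_0x + B_0y = 1/t$ is just a more detailed version of the same fact. Nothing further is needed.
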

\begin{proof}
This follows from Proposition~\ref{slopedistance}(\textit{i}).
\end{proof}

\begin{proposition} \label{concurrent}
Let $m, n \in \mathbb{R}$ be constants not both zero.
Consider the line $l$ of points in the coefficient plane given by $mA + nB = 1$,
Then ${\mathcal L}_{l}$ consists of all lines through $(m, n) \in \mathbb{R}^2$ except for the line through $(m, n)$ that passes through $O$. 
\end{proposition}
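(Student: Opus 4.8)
The plan is to set up the bijective correspondence between points of $\coeffs$ and lines of $\mathbb{R}^2$ not through $O$, and then simply chase the defining equation of $l$ through it. Recall the well-known fact that every line in $\mathbb{R}^2$ avoiding $O$ can be written as $Ax+By=1$ for a \emph{unique} $(A,B)\in\coeffs$; equivalently, $P\mapsto L_P$ is a bijection from $\coeffs$ onto the set of lines missing $O$. Observe also that, since $(m,n)\neq(0,0)$, the set $l=\{(A,B)\in\coeffs : mA+nB=1\}$ is a genuine (nonempty) line in $\coeffs$, and $(m,n)$ is a point of $\mathbb{R}^2$ distinct from $O$, so there is exactly one line of $\mathbb{R}^2$ through both $(m,n)$ and $O$; call it $\ell_0$. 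The target set ``all lines through $(m,n)$ except $\ell_0$'' is then nonempty, matching that $l\neq\varnothing$.

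For the forward inclusion, take $P=(A,B)\in l$. Substituting $(x,y)=(m,n)$ into the equation of $L_P$ gives $Am+Bn=1$, which holds precisely because $P\in l$; hence $(m,n)\in L_P$. Moreover $L_P$ cannot pass through $O$, since $A\cdot 0+B\cdot 0=0\neq 1$, so $L_P\neq\ell_0$. Thus every element of ${\mathcal L}_l$ is a line through $(m,n)$ other than $\ell_0$.

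For the reverse inclusion, let $\ell$ be any line of $\mathbb{R}^2$ through $(m,n)$ with $\ell\neq\ell_0$. Since $\ell_0$ is the only line through $(m,n)$ that also passes through $O$, the line $\ell$ misses $O$, so by the bijection $\ell=L_P$ for a unique $P=(A,B)\in\coeffs$. From $(m,n)\in\ell=L_P$ we get $Am+Bn=1$, i.e.\ $P\in l$, so $\ell\in{\mathcal L}_l$. Finally, $\ell_0$ itself passes through $O$ and so is not of the form $L_P$ for any $P$, confirming it is genuinely absent from ${\mathcal L}_l$. Combining the two inclusions yields the claim.

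There is no serious obstacle here; the only point requiring care is the uniqueness half of the line–point correspondence (so that $P\in l$ is \emph{forced}, not merely possible) together with the observation that a line through the origin is never any $L_P$. One could alternatively present the argument as an instance of the duality between $\coeffs$ and $\mathbb{R}^2$ — the relation $mA+nB=1$ being symmetric in $(m,n)$ and $(A,B)$ — but the direct substitution above is the most transparent.
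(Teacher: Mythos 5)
Your proposal is correct and follows essentially the same route as the paper: substitute $(m,n)$ into $Ax+By=1$ for the forward inclusion, and for the converse write any line through $(m,n)$ missing $O$ as $Cx+Dy=1$ to conclude $(C,D)\in l$. The extra remarks on uniqueness and the excluded line $\ell_0$ only make explicit what the paper leaves implicit.
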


\begin{proof}
For any $(A, B) \in l$, we have $mA + nB = 1$ so $(m, n)$ is on every line $L_P$ where $P \in l$.
Conversely, if $L$ is a line that contains $(m, n)$ but not $O$, then we may write the equation of the line $L$ as $Cx+Dy=1$ for some constants $C$ and $D$ such that $Cm + Dn = 1$. Therefore $(C, D) \in l$.
\end{proof}

In the setup of the above proposition, note that the lines $L_P$ rotate around $(m,n)$ in the clockwise direction when $P$ traverses $l$ from left to right.

\begin{proposition} \label{region}
Let $S \subset \coeffs$. The regions created by the lines in ${\mathcal L}_S$ are convex. Furthermore, if $S$ is finite, the bounded regions created by ${\mathcal L}_S$ are convex polygons.
\end{proposition}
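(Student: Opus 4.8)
The plan is to prove Proposition~\ref{region} by reducing everything to the standard fact that a half-plane is convex and that finite intersections of convex sets are convex. First I would observe that each line $L_P$ (for $P \in S$) partitions $\mathbb{R}^2$ into two open half-planes, and that every region created by the arrangement $\mathcal{L}_S$ is, by definition, a connected component of the complement of $\bigcup_{P \in S} L_P$. The key structural remark is that each such region is exactly the intersection $\bigcap_{P \in S} H_P$, where $H_P$ is the one of the two open half-planes determined by $L_P$ that contains the region (together with, in the case of the closed region, the appropriate closed half-planes). So I would make precise the claim that a region equals a finite intersection of half-planes: take any point $z$ in a region $R$; for each $P$, the line $L_P$ does not meet $R$, and since $R$ is connected it must lie entirely in one of the two open half-planes cut by $L_P$, namely the one containing $z$. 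Hence $R \subseteq \bigcap_P H_P(z)$, and conversely this intersection is connected (being convex and nonempty) and misses every line, so it is contained in a single region, forcing equality.

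From there, convexity is immediate: a half-plane is convex, and the intersection of any collection of convex sets is convex, so $R = \bigcap_{P\in S} H_P$ is convex. The first sentence of the proposition does not even require $S$ to be finite — the argument above works verbatim for arbitrary $S$, since the only thing used is that a region is a connected component avoiding all the lines. For the second sentence, I would add the finiteness hypothesis and argue that a bounded convex region which is a component of the complement of finitely many lines is a polygon. The cleanest route: let $R$ be a bounded region, so $\overline{R} = \bigcap_{P \in S} \overline{H_P}$ is a bounded closed convex set, hence compact. Its boundary is contained in the finite union $\bigcup_{P\in S} L_P$, so $\partial R$ is a finite union of (bounded, since $R$ is bounded) line segments lying on these finitely many lines; a compact convex set whose boundary is a finite union of segments is by definition a convex polygon. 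One should note the degenerate possibilities — $R$ could in principle be a single point or a segment if the arrangement is such that a ``region'' collapses — but since $R$ is an open region (a component of an open set) it is either empty or genuinely two-dimensional, so $\overline{R}$ is a two-dimensional compact convex polygon.

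The main obstacle, such as it is, is purely one of making the ``connected component avoiding the lines lies in one half-plane for each line'' step airtight and then knowing the right off-the-shelf characterization of a polygon. Concretely, the subtle point is the reverse inclusion $\bigcap_P H_P(z) \subseteq R$: one must argue that this intersection is connected (which follows from convexity, already established) and disjoint from every $L_P$ (immediate, since $H_P(z)$ is an open half-plane on one side of $L_P$), hence lies in a single connected component of the complement, which must be $R$ because it contains $z$. Everything else is standard convex-geometry bookkeeping. Because these facts are genuinely elementary and well-known, it would also be entirely reasonable to state the proposition and give only this brief sketch, or even to cite a standard reference on arrangements of hyperplanes, rather than belaboring the point.
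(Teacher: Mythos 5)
Your proposal is correct and follows essentially the same route as the paper's proof: regions are intersections of half-planes, hence convex, and a bounded region cut out by finitely many lines has finitely many linear sides, hence is a polygon. You simply spell out in more detail the steps (the component-equals-intersection argument and the polygon characterization) that the paper treats as standard.
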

\begin{proof}
All the regions formed by the lines in ${\mathcal L}_S$ are intersections of half-planes. Since half-planes are convex, it follows that all regions created by the lines in ${\mathcal L}_S$ are convex.
(For general properties of convex sets, see \cite{convex}.)
Additionally, when $S$ is finite, a bounded region will have a finite number of linear sides, and hence be a polygon. 
\end{proof}

\subsection{Polygons and Points}
Let $S \subset \coeffs$ be finite. From Proposition~\ref{region}, we know that $\mathbb{R}^2$ is partitioned into convex regions, of which the bounded ones are polygons. Let $\Delta$ be one of these polygonal regions, and let $n$ be the number of its sides. 

\begin{figure}[ht!]
    \centering
    \includegraphics[width=11cm]{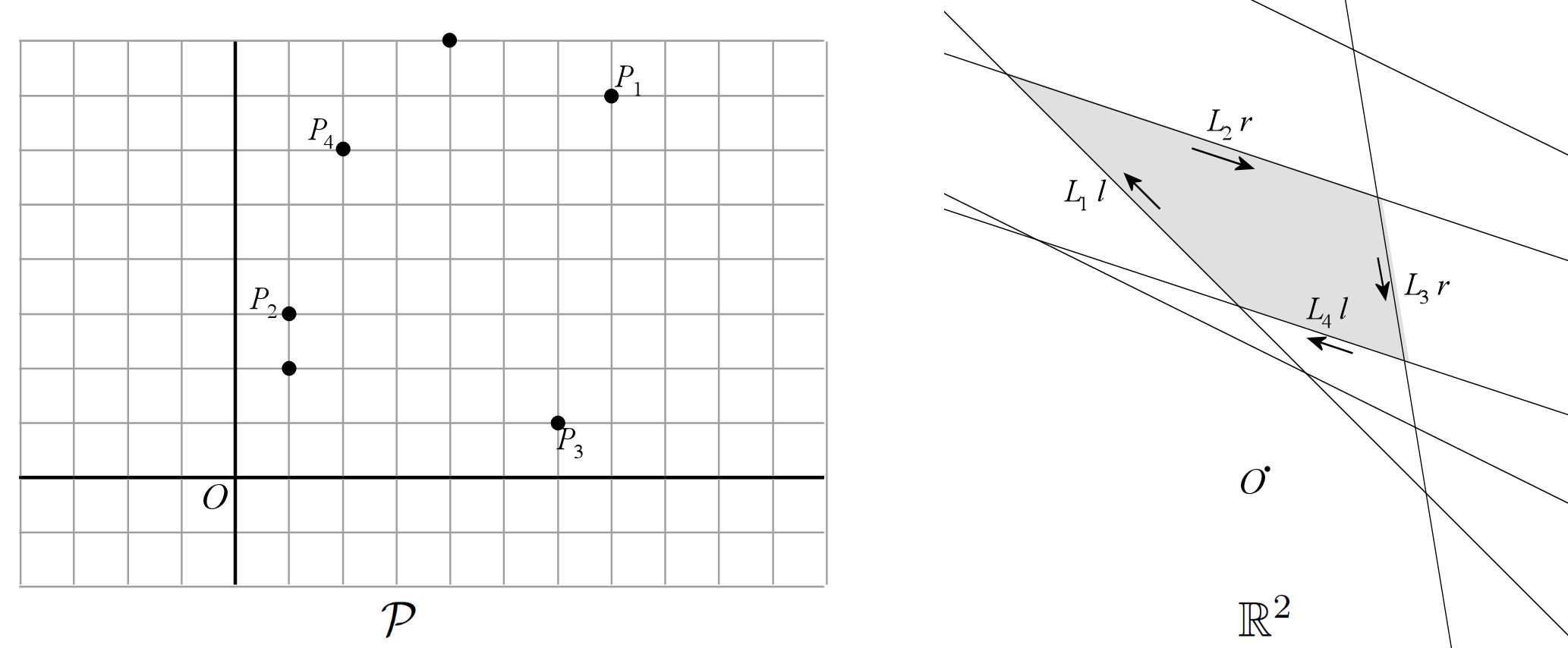}
    \caption{The points $P_k$ corresponding to the lines $L_k$ that bound the shaded region. The values of $D_k$ are shown as well.}
    \label{notation}
\end{figure}

\begin{definition}
Define $L_k$, $k \in \mathbb{Z}/n\mathbb{Z}$ to be the sequence of lines that contain the sides of $\Delta$ as encountered when traveling around $\Delta$ in the clockwise direction (up to translation in the index $k$). Similarly, define the sequence of points in $\coeffs$ that correspond to said $L_k$ to be $P_k$.
\end{definition}

\begin{definition} \label{LPOrientation}
Define the sequence $D_k$, $k \in \mathbb{Z}/n\mathbb{Z}$, to be a sequence of l's and r's (for ``left" and ``right") by declaring $D_k$ to be the side of $L_k$ on which the origin sits when traveling around the boundary of $\Delta$ in the clockwise sense. \end{definition}

See Figure~\ref{notation} for an illustration of these definitions.

\begin{lemma} \label{which_vertical}
If $D_k = D_{k+1}$ then the polygon $\Delta$ and the origin will be contained in a pair of opposite angles formed by $\el{k}$ and $\el{k+1}$. If $D_k \neq D_{k+1}$ then $\Delta$ and the origin will be in adjacent angles formed by $\el{k}$ and $\el{k+1}$.
\end{lemma}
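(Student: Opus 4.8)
The plan is to work directly with the two lines $\el{k}$ and $\el{k+1}$ and the shared vertex $V = \el{k} \cap \el{k+1}$, which is a vertex of $\Delta$. These two lines divide $\mathbb{R}^2$ into four angular regions meeting at $V$: two pairs of vertical (opposite) angles. Since $\Delta$ is convex and has $\el{k}$ and $\el{k+1}$ as consecutive sides meeting at $V$, the polygon lies entirely inside one of these four angles. The claim is about which angle the origin $O$ lies in relative to $\Delta$'s angle, and I would deduce this purely from the combinatorial data $D_k, D_{k+1}$.

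First I would set up local coordinates at $V$: pick a direction vector $u_k$ along $\el{k}$ and $u_{k+1}$ along $\el{k+1}$. Being consecutive sides traversed clockwise, $\el{k}$ enters $V$ and $\el{k+1}$ leaves $V$, so I can fix orientations of $u_k, u_{k+1}$ consistent with the clockwise traversal; then the angle containing $\Delta$ near $V$ is spanned by $-u_k$ and $u_{k+1}$ (the ``incoming reversed'' and ``outgoing'' edge directions). The key translation step: saying $O$ is on side $D_k$ of $\el{k}$ is exactly the statement of which of the two half-planes bounded by $\el{k}$ contains $O$; by Definition~\ref{LPOrientation}, $D_k$ is defined so that $\Delta$ sits on the $\overline{D_k}$ side (the opposite side) — or more precisely, $D_k$ records the side of $O$ as one walks clockwise, which pins down the half-plane of $\el{k}$ containing $\Delta$ versus the one containing $O$. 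I would make this precise and then observe: $O$ and $\Delta$ are in the same half-plane of $\el{k}$ iff $D_k$ has a particular value, and likewise for $\el{k+1}$.

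The crux is then a short case analysis on the four angles. Label the four angles at $V$ by the pair (half-plane of $\el{k}$, half-plane of $\el{k+1}$) they occupy. The angle containing $\Delta$ is $(\sigma_k, \sigma_{k+1})$ for some signs determined by the clockwise-walk convention. Now: if $D_k = D_{k+1}$, then the relationship ``$O$ on the same side as $\Delta$'' holds for $\el{k}$ iff it holds for $\el{k+1}$ — so either $O$ is in the same angle as $\Delta$ (both ``same side''), which is impossible since $O \notin \Delta$ and $O \notin$ the lines, or $O$ is in the angle $(\overline{\sigma_k}, \overline{\sigma_{k+1}})$, which is precisely the angle vertical (opposite) to $\Delta$'s. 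If $D_k \neq D_{k+1}$, then $O$ agrees with $\Delta$'s half-plane on exactly one of the two lines, placing $O$ in either $(\sigma_k, \overline{\sigma_{k+1}})$ or $(\overline{\sigma_k}, \sigma_{k+1})$ — each of which is an angle adjacent to $\Delta$'s. This gives the dichotomy.

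The main obstacle I anticipate is bookkeeping the sign conventions correctly — in particular, verifying that Definition~\ref{LPOrientation}'s ``side of $L_k$ on which the origin sits when traveling clockwise'' really does translate into ``$O$ and $\Delta$ are on the same side of $\el{k}$ iff $D_k = \ell$'' (or $= r$), with the choice consistent between $\el{k}$ and $\el{k+1}$ given that the clockwise traversal direction along $\el{k}$ near $V$ is the reverse of the direction along $\el{k+1}$ near $V$. Once that orientation dictionary is nailed down — ideally by drawing the picture in Figure~\ref{notation} and checking one concrete configuration, then arguing the general case follows by continuity/symmetry — the rest is the immediate case check above. I would also need the trivial observations that $O \notin \Delta$ (else the problem is about a different configuration) and $O$ lies on neither line, so $O$ genuinely lies in the interior of one of the four open angles.
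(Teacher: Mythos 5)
Your overall strategy---translating each $D_k$ into the statement of which half-plane of $\el{k}$ contains $O$ relative to $\Delta$ (using that $\Delta$ sits on the walker's right during a clockwise traversal) and then checking the four angles at the vertex $\el{k} \cap \el{k+1}$---is essentially the paper's argument. However, there is a concrete error in your $D_k = D_{k+1}$ case: you dismiss the sub-case ``$O$ lies in the same angle as $\Delta$'' as impossible ``since $O \notin \Delta$ and $O \notin$ the lines.'' That inference is false. The angle at the vertex containing $\Delta$ is an unbounded region strictly larger than $\Delta$, so $O$ can lie in that angle without lying in $\Delta$. Indeed this sub-case is not vacuous: with the correct dictionary ($D_k = r$ exactly when $O$ is on the same side of $\el{k}$ as $\Delta$, since $\Delta$ is on the right when walking clockwise), the case $D_k = D_{k+1} = r$ is precisely the configuration in which $O$ and $\Delta$ occupy the \emph{same} angle, which is what the paper's proof states; only the case $D_k = D_{k+1} = l$ puts $O$ in the vertical angle opposite $\Delta$'s.

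The error happens not to sink the lemma as stated, because ``contained in a pair of opposite angles'' holds whether $O$ is in $\Delta$'s own angle or in its vertical angle; so the repair is simply to retain both sub-cases instead of excluding one. But as written, your intermediate conclusion that $O$ must be in the angle opposite to $\Delta$'s is wrong whenever $D_k = D_{k+1} = r$, and an argument that relied on that finer placement would fail. Your treatment of the $D_k \neq D_{k+1}$ case (same side for exactly one of the two lines, hence adjacent angles) is correct and matches the paper.
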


\begin{proof}
Note that as we travel around the boundary of $\Delta$ in the clockwise direction, the region $\Delta$ is always on our right side.

Suppose $D_k = D_{k+1}=r$. Then, the origin is also on the right of \textit{both} sides when walking from $\el{k}$ to $\el{k+1}$. Therefore, both $\Delta$ and the origin are in the same angle formed by $\el{k}$ and $\el{k+1}$.

When $D_k = D_{k+1}=l$, the origin is on the left of {\it both} sides when walking from $\el{k}$ to $\el{k+1}$. Therefore, $\Delta$ and the origin are in opposite angles formed by $\el{k}$ and $\el{k+1}$. 

If $D_k \neq D_{k+1}$, $\Delta$ is on the same side of one of the lines as the origin and on the opposite side of the other line as the origin. Therefore, $\Delta$ and the origin are in adjacent angles formed by $\el{k}$ and $\el{k+1}$. 
\end{proof}

\begin{lemma} \label{consecutivesides}
If $D_k = D_{k+1}$ then $\overleftrightarrow{P_kP_{k+1}} \cap S = \overline{P_kP_{k+1}} \cap S$. If $D_k \neq D_{k+1}$ then $\overline{P_kP_{k+1}} \cap S = \{ P_k, P_{k+1} \}$. 
\end{lemma}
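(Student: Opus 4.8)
The plan is to localize at the vertex $V=L_k\cap L_{k+1}$ of $\Delta$ — these lines meet, since consecutive sides of a polygon share a vertex and hence cannot be parallel — and transfer the question into $\coeffs$ via Proposition~\ref{concurrent}. Writing $V=(m,n)$ and letting $\ell_V\subset\coeffs$ be the line $mA+nB=1$, Proposition~\ref{concurrent} identifies ${\mathcal L}_{\ell_V}$ with the pencil of all lines through $V$ except $\overleftrightarrow{OV}$; since $L_k$ and $L_{k+1}$ pass through $V$ and not through $O$, we get $P_k,P_{k+1}\in\ell_V$ and therefore $\overleftrightarrow{P_kP_{k+1}}=\ell_V$. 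So the lemma is exactly the statement that the points of $S$ on $\ell_V$ — equivalently, the lines of ${\mathcal L}_S$ passing through $V$ — all have their coefficient points inside the closed segment $\overline{P_kP_{k+1}}$ (when $D_k=D_{k+1}$), or only $P_k,P_{k+1}$ do (when $D_k\neq D_{k+1}$).

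Next I would set up the local picture. Because $\Delta$ is a region of the arrangement, a small enough disk about $V$ meets only the lines of ${\mathcal L}_S$ through $V$, and $\Delta$ occupies exactly one of the open sectors they cut out, bounded by rays of $L_k$ and $L_{k+1}$; by convexity of $\Delta$ this sector has angle less than $\pi$. No line of ${\mathcal L}_S$ enters its interior, so, viewing the pencil of lines through $V$ as a circle of directions, every line of ${\mathcal L}_S$ through $V$ (including $L_k,L_{k+1}$) lies in the closed arc between $L_k$ and $L_{k+1}$ that avoids $\Delta$; call this arc $B$, and the complementary closed arc $A$, so that $A\cap B$ consists exactly of $L_k$ and $L_{k+1}$. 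By the remark following Proposition~\ref{concurrent}, $P\mapsto L_P$ is a monotone (order-faithful) correspondence from $\ell_V$ onto this pencil minus $\overleftrightarrow{OV}$, with $\overleftrightarrow{OV}$ sitting at the two ends of $\ell_V$ "at infinity"; hence the closed segment $\overline{P_kP_{k+1}}$ corresponds to the one arc between $L_k$ and $L_{k+1}$ that does \emph{not} contain $\overleftrightarrow{OV}$. Finally I would locate $\overleftrightarrow{OV}$ on the circle: $O$ lies on neither $L_k$ nor $L_{k+1}$, so it is strictly inside one of the four sectors those lines cut at $V$, and Lemma~\ref{which_vertical} tells us which: if $D_k=D_{k+1}$ then $O$ is in $\Delta$'s sector or its vertical sector, putting $\overleftrightarrow{OV}$ in arc $A$; if $D_k\neq D_{k+1}$ then $O$ is in an adjacent sector, putting $\overleftrightarrow{OV}$ in arc $B$.

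Combining: if $D_k=D_{k+1}$, then $\overline{P_kP_{k+1}}$ corresponds to the arc $B$, which already contains \emph{every} line of ${\mathcal L}_S$ through $V$, so $S\cap\ell_V=S\cap\overline{P_kP_{k+1}}$ and thus $\overleftrightarrow{P_kP_{k+1}}\cap S=\overline{P_kP_{k+1}}\cap S$; if $D_k\neq D_{k+1}$, then $\overline{P_kP_{k+1}}$ corresponds to the arc $A$, and since all lines of ${\mathcal L}_S$ through $V$ lie in $B$ while $A\cap B$ is just $L_k$ and $L_{k+1}$, the only points of $S$ on $\overline{P_kP_{k+1}}$ are $P_k$ and $P_{k+1}$. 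The step I expect to be the main obstacle is the "circle of directions" bookkeeping: making precise the identification of the pencil through $V$ with a circle, verifying that the monotone image of the \emph{segment} $\overline{P_kP_{k+1}}$ (as opposed to one of its complementary rays) is the arc avoiding $\overleftrightarrow{OV}$ — which forces one to track the point at infinity of $\ell_V$ — and matching the four-sector case split of Lemma~\ref{which_vertical} to the arcs $A$ and $B$ without an orientation slip. By contrast, the assertion that every arrangement line through $V$ lands in arc $B$ should follow cleanly from the cell property of $\Delta$ together with its interior angle at $V$ being less than $\pi$.
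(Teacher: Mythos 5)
Your proof is correct and follows essentially the same route as the paper's: identify $\overleftrightarrow{P_kP_{k+1}}$ via Proposition~\ref{concurrent} with the pencil of lines through $L_k\cap L_{k+1}$, with the line through $O$ playing the role of the point at infinity so that the segment corresponds to one pair of opposite angles and its complement to the other, then use Lemma~\ref{which_vertical} together with the fact that no line of ${\mathcal L}_S$ may cut through $\Delta$. Your localization at the vertex and the ``circle of directions'' bookkeeping is just a more explicit phrasing of the paper's statement that the lines $L_Z$ for $Z$ in (respectively, outside) the segment sweep out the angle pair not containing (respectively, containing) $O$, so no substantive difference.
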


\begin{proof}
Recall from Proposition~\ref{concurrent}, that ${\mathcal L}_{\overleftrightarrow{P_kP_{k+1}}}$ comprises the set of all lines through $\el{k} \cap \el{k+1}$ other than the one which passes through $O$.
The line that contains $O$ is the limit of the lines $L_Z$ as $Z$ moves on $\overleftrightarrow{P_kP_{k+1}}$ toward infinity (in either direction).
Therefore, the union of the lines $L_Z$ for $Z \in \overline{P_kP_{k+1}}$ is the pair of opposite angles created by $\el{k}$ and $\el{k+1}$ which \textit{does not} contain $O$, and the union of the lines $L_Z$ for $Z \in \overleftrightarrow{P_kP_{k+1}} \cap \overline{P_kP_{k+1}}^c$, together with the line that passes through $O$ and $\el{k} \cap \el{k+1}$, is the pair of opposite angles created by $\el{k}$ and $\el{k+1}$ which \textit{does} contain $O$.

If $D_k = D_{k+1}$ then, by Lemma~\ref{which_vertical}, $\Delta$ is in the pair of opposite angles that contains the origin. Therefore, there must not be any point in $S \cap \overleftrightarrow{P_kP_{k+1}} \cap \overline{P_kP_{k+1}}^c$, for lines associated to such points would split $\Delta$.

If $D_k \neq D_{k+1}$, then, by Lemma~\ref{which_vertical}, $\Delta$ is in the pair of opposite angles that does not contain the origin. Therefore, there must not be any point in $\overline{P_kP_{k+1}} \cap S$, other than $P_k$ and $P_{k+1}$, for lines associated to such points would split $\Delta$.
  
\end{proof}

\begin{proposition} \label{verticalangles}
Let $x \in {\mathbb{R}^2}$ be any point not in $\Delta$ or on the extension of any of the sides of $\Delta$.
Then there exist exactly two vertices of $\Delta$ where $\Delta$ and $x$ are in adjacent angles formed by the extensions of the sides intersecting at that vertex. 
\end{proposition}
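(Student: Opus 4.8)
The plan is to recast the statement, in the spirit of Lemma~\ref{which_vertical}, as a fact about a cyclic sequence of $l$'s and $r$'s. Let $\ell_1,\dots,\ell_n$ be the lines carrying the sides of $\Delta$ in clockwise order, $v_k=\ell_k\cap\ell_{k+1}$ its vertices (indices mod $n$), and, by analogy with Definition~\ref{LPOrientation}, let $E_k\in\{l,r\}$ record which side of $\ell_k$ the point $x$ lies on with respect to the clockwise traversal of $\partial\Delta$; since $\Delta$ lies to the right of every $\ell_k$ in that traversal, $E_k=r$ means exactly that $x$ and $\Delta$ are on the same side of $\ell_k$. Running the proof of Lemma~\ref{which_vertical} with $x$ in place of $O$ shows that at $v_k$ the polygon $\Delta$ and the point $x$ lie in adjacent angles of $\ell_k,\ell_{k+1}$ precisely when $E_k\ne E_{k+1}$ (and in a pair of vertical angles otherwise). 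Hence the proposition is equivalent to the assertion that the cyclic sequence $E_1,\dots,E_n$ changes value at exactly two indices.

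To prove that, I would show that the sides $\ell_k$ with $E_k=l$ --- i.e.\ the sides of $\Delta$ whose lines strictly separate $\Delta$ from $x$ --- form a single nonempty, proper, contiguous block in the cyclic order; its complement is then also a nonempty contiguous block, so value changes can occur only at the two ``ends'' of the block, giving exactly two. The tool is the pair of supporting lines of $\Delta$ through $x$: they exist and are unique because $x\notin\Delta$, and because $x$ lies on the extension of no side they meet $\Delta$ at two distinct vertices $v_a,v_b$ rather than along a side, cutting $\partial\Delta$ into a ``near'' arc $\mathcal N$ and a ``far'' arc $\mathcal F$ --- namely, for every direction $\theta$ strictly inside the angle these two lines make at $x$, the ray from $x$ in direction $\theta$ meets $\Delta$ in a chord whose two endpoints are the points of $\mathcal N$ and of $\mathcal F$ lying in that direction. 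One then checks the dichotomy: a side lying on $\mathcal N$ has $E_k=l$, because the open segment from $x$ to a relative-interior point of that side misses $\Delta$ entirely; and a side lying on $\mathcal F$ has $E_k=r$, because such a segment first passes through a point of $\mathcal N$ and hence meets the interior of $\Delta$, which forces $x$ and $\Delta$ onto the same side of $\ell_k$. Since $\mathcal N$ and $\mathcal F$ are genuine arcs (the boundary of a polygon minus an arc cannot be just two points), each contains at least one side, so the block of sides with $E_k=l$ is nonempty, proper, and contiguous. (That the part of a convex polygon's boundary visible from an exterior point is connected is in any case a standard fact; see \cite{convex}.)

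I expect the second paragraph to be where the actual work lies: pinning down the two supporting lines of $\Delta$ through $x$, and in particular using the hypothesis that $x$ lies on the extension of no side to guarantee that each of these lines touches $\Delta$ at a single vertex rather than along an edge, and then verifying the near/far dichotomy for the sides (the subtle direction being ``a side on $\mathcal F$ has $E_k=r$'', where one again invokes that $x$ is on no side-line to rule out the ray being tangent along that side). The reduction in the first paragraph is a direct transcription of Lemma~\ref{which_vertical}, and the final bookkeeping --- nonempty proper contiguous block yields exactly two value changes --- is immediate.
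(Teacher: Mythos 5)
Your proposal is correct, but it takes a genuinely different route from the paper. The paper works vertex-by-vertex: it joins $x$ to every vertex of $\Delta$, selects the two vertices $R_i,R_j$ that maximize the angle $\angle R_i x R_j$, observes that the segments $xR_i$, $xR_j$ miss the interior of $\Delta$ while the rays to all other vertices pass through it, and uses the criterion ``the vertical-angle pair at a vertex contains both $\Delta$ and $x$ exactly when the ray from $x$ through that vertex enters the interior.'' You instead work side-by-side: you introduce the $x$-analogue $E_k$ of the sequence $D_k$, convert the statement via (the proof of) Lemma~\ref{which_vertical} into the claim that $E_1,\dots,E_n$ changes value exactly twice, and prove that claim by showing that the sides whose lines separate $x$ from $\Delta$ are precisely the sides on the ``near'' arc cut out by the two supporting lines of $\Delta$ through $x$, hence form a nonempty proper contiguous block. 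In effect you prove the $x$-version of Lemma~\ref{numberorient} directly and deduce Proposition~\ref{verticalangles} from it, reversing the paper's logical order (the paper proves the proposition first and then gets Lemma~\ref{numberorient}). Your two tangent vertices are of course the paper's $R_i,R_j$, so the geometry is the same at heart; what your route buys is an explicit tie-in with the $D_k$ formalism and a cleaner use of the hypothesis that $x$ is on no side-line (to force each supporting line to touch at a single vertex), at the cost of somewhat more machinery, whereas the paper's maximal-angle argument is shorter and more elementary. The steps you leave as ``the actual work''---that a visible side strictly separates ($E_k=l$, via the local half-plane picture at a relative-interior point of the side) and that a far side does not ($E_k=r$, via the chord through the interior and the fact that a segment from a point strictly on one side of $\ell_k$ to a point of $\ell_k$ stays on that side)---do go through as you indicate, so the sketch is completable as written.
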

\begin{proof}
Let us first draw the lines connecting $x$ with each vertex of $\Delta$. Consider the vertices $R_i$ and $R_j$ of $\Delta$ such that the angle formed by the rays $\overrightarrow{xR_i}$ and $\overrightarrow{xR_j}$ is maximized. Since this angle is maximized, $xR_i$ and $xR_j$ do not intersect the interior of $\Delta$, so no pair of opposite angles formed by sides meeting at either $R_i$ or $R_j$ will contain both $\Delta$ and $x$.

The lines connecting $x$ with the remaining vertices of the polygon will be contained within the angle formed by $\overrightarrow{xR_i}$ and $\overrightarrow{xR_j}$, thus, these lines must go through the interior of $\Delta$. For any vertex $R_k$ where $\overrightarrow{xR_k}$ goes through the interior of $\Delta$, the pair of opposite angles formed by the sides of $\Delta$ with a vertex at $R_k$ will contain both $\Delta$ and all points on $\overrightarrow{xR_k}$, including $x$. 
\end{proof}

\begin{lemma} \label{numberorient}
If we arrange the values of the sequence of $D_1$, \dots, $D_n$ in order around a circle, there will be two places where the adjacent values differ, unless $\Delta$ contains the origin, in which case all the values will be r. 
\end{lemma}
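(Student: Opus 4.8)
The plan is to reduce the statement about the sequence $D_1,\dots,D_n$ to the geometric dichotomy already packaged in Lemma~\ref{which_vertical} and Proposition~\ref{verticalangles}, applied to the point $x = O$. First I would dispose of the case where $\Delta$ contains the origin: if $O \in \Delta$, then since $\Delta$ is convex and lies entirely on one side of each line $L_k$, and the origin is in $\Delta$, the origin lies on the same side of every $L_k$ as the interior of $\Delta$. By the convention in Definition~\ref{LPOrientation}, walking clockwise keeps $\Delta$ on our right, so $D_k = r$ for every $k$, and there are no sign changes. This matches the exceptional clause.

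For the main case, assume $O \notin \Delta$. I would first check that $O$ is not on the extension of any side: if $O$ lay on some $L_k$, then $L_k$ would pass through the origin, contradicting the standing assumption that all lines have the form $Ax+By=1$ (equivalently, $P_k \in \coeffs$, so no $L_{P_k}$ passes through $O$). Hence Proposition~\ref{verticalangles} applies with $x = O$ and tells us there are exactly two vertices of $\Delta$ at which $\Delta$ and $O$ lie in \emph{adjacent} angles formed by the two sides meeting there; at every other vertex, $\Delta$ and $O$ lie in a pair of \emph{opposite} (vertical) angles. The vertex where sides $L_k$ and $L_{k+1}$ meet is precisely the intersection $\el{k}\cap\el{k+1}$, so Lemma~\ref{which_vertical} identifies ``adjacent angles'' with $D_k \neq D_{k+1}$ and ``opposite angles'' with $D_k = D_{k+1}$. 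Therefore the number of indices $k \in \mathbb{Z}/n\mathbb{Z}$ with $D_k \neq D_{k+1}$ equals the number of vertices of the first type, which is exactly two. Arranging $D_1,\dots,D_n$ around a circle, the places where adjacent values differ are exactly these $k$, so there are precisely two of them, as claimed.

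I do not expect a serious obstacle here — the lemma is essentially a repackaging of two results already proved. The one point requiring a little care is making sure that Proposition~\ref{verticalangles} genuinely applies: its hypothesis excludes $x$ lying on any extended side, and one must invoke the global assumption ($P_k \in \coeffs$) to guarantee $O$ avoids all the lines $L_k$. A secondary small check is the bookkeeping that each vertex of $\Delta$ corresponds to exactly one consecutive pair $(L_k, L_{k+1})$ and vice versa, so that ``two vertices of one type'' translates cleanly into ``two sign changes in the cyclic sequence''; this is immediate once one notes that consecutive sides of a polygon meet at exactly one vertex and every vertex is the meeting of a unique consecutive pair. With those observations in place the proof is just a matter of stitching Lemma~\ref{which_vertical} and Proposition~\ref{verticalangles} together.
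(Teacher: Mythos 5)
Your proposal is correct and follows essentially the same route as the paper: both reduce the count of sign changes in $D_1,\dots,D_n$ to Lemma~\ref{which_vertical} together with Proposition~\ref{verticalangles} applied at $x=O$, and handle the origin-containing case separately. Your version is if anything slightly cleaner, since you invoke the statement of Proposition~\ref{verticalangles} directly (after checking $O$ lies on no line $L_{P_k}$) rather than referring, as the paper does, to the vertices $R_i,R_j$ constructed inside that proposition's proof.
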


\begin{proof}
Assume $\Delta$ does not contain the origin. Imagine rotating $\el{k}$ clockwise to $\el{k+1}$ around $\el{k} \cap \el{k+1}$. By Lemma~\ref{which_vertical}, note that as we rotate the line, it will cross over the origin if and only if $D_k \neq D_{k+1}$. This occurs if and only if $\el{k} \cap \el{k+1}$ is equal to $R_i$ or $R_j$, where $R_i$ and $R_j$ are as defined in the proof of Proposition~\ref{verticalangles}. Thus, there will be only two places where the values of $D_k$ change.

When $\Delta$ contains the origin, there is no place where the values can change, since all rays from the origin through a vertex of $\Delta$ go through the interior of the polygon; and, indeed, when $\Delta$ contains the origin, $D_k = r$ for all $k$.
\end{proof}

\begin{proposition}\label{valueofd}
If $P_{k+1}$ is to the left of $P_k$ then $D_{k+1} \neq D_k$. If $P_{k+1}$ is to the right of $P_k$ then $D_{k+1} = D_k$.
\end{proposition}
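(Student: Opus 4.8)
The plan is to follow the line $L_Z$ as the point $Z$ travels along the segment $\overline{P_kP_{k+1}}$ in $\coeffs$, tracking the direction in which $L_Z$ rotates about the common vertex $V := \el{k}\cap\el{k+1}$ of $\Delta$, and to match this against the two quantities in the statement. Since $\el{k}$ and $\el{k+1}$ are consecutive sides of $\Delta$ they are not parallel, so by Proposition~\ref{parallellines} the line $\overleftrightarrow{P_kP_{k+1}}$ does not pass through the origin; in particular $P_{k+1}\notin\overleftrightarrow{OP_k}$, so $P_{k+1}$ is strictly to the left or strictly to the right of $P_k$, and I will treat these as the two cases of one argument.

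The first ingredient is a ``left/right $\leftrightarrow$ rotation'' dictionary. By Proposition~\ref{concurrent} and the remark following it, $\mathcal{L}_{\overleftrightarrow{P_kP_{k+1}}}$ is the pencil of all lines through $V$ other than the one through $O$, and $L_Z$ rotates clockwise about $V$ as $Z$ travels to the right along $\overleftrightarrow{P_kP_{k+1}}$. Now one of the two rays of $\overleftrightarrow{P_kP_{k+1}}$ emanating from $P_k$ consists of the points to the right of $P_k$ and the other of the points to the left of $P_k$ (this line is distinct from $\overleftrightarrow{OP_k}$ and so meets it only at $P_k$), so travelling from $P_k$ to $P_{k+1}$ along the segment is a rightward trip exactly when $P_{k+1}$ lies to the right of $P_k$. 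Hence $L_Z$ rotates clockwise from $\el{k}$ to $\el{k+1}$ when $P_{k+1}$ is to the right of $P_k$, and counterclockwise when $P_{k+1}$ is to the left of $P_k$.

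The second ingredient identifies which pair of vertical angles at $V$ the rotation sweeps. By the argument already given in the proof of Lemma~\ref{consecutivesides}, as $Z$ ranges over the closed segment $\overline{P_kP_{k+1}}$ the lines $L_Z$ fill exactly the pair of vertical angles at $V$ that does \emph{not} contain $O$; so the rotation from $\el{k}$ to $\el{k+1}$ found above is the one that sweeps this $O$-free pair. Independently, I claim that because we traverse $\Delta$ clockwise, entering $V$ along $\el{k}$ and leaving along $\el{k+1}$, the \emph{counterclockwise} rotation of the line from $\el{k}$ to $\el{k+1}$ sweeps the pair of vertical angles containing $\Delta$, while the clockwise rotation sweeps the pair not containing $\Delta$. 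I expect this to be the delicate step. I would prove it by letting $\vec u$ be the direction in which we enter $V$ and $\vec v$ the direction in which we leave, checking (this is the part that needs care with orientations) that the interior angle of $\Delta$ at $V$ is precisely the sector swept counterclockwise from the ray $-\vec u$ to the ray $\vec v$, and then observing that rotating the undirected line $\el{k}$ counterclockwise through exactly that interior angle carries $-\vec u$ to $\vec v$ and hence $\el{k}$ to $\el{k+1}$, sweeping the interior sector and its opposite along the way.

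Finally, I would assemble the pieces. If $P_{k+1}$ is to the left of $P_k$, then the rotation from $\el{k}$ to $\el{k+1}$ along the segment is counterclockwise, so the counterclockwise rotation sweeps the $O$-free pair; but it also sweeps the pair containing $\Delta$, so $\Delta$ and $O$ lie in different pairs of vertical angles, whence $D_{k+1}\neq D_k$ by Lemma~\ref{which_vertical}. If $P_{k+1}$ is to the right of $P_k$, the same reasoning with ``clockwise'' in place of ``counterclockwise'' shows the pair \emph{not} containing $\Delta$ is the $O$-free pair, so $\Delta$ and $O$ lie in the same pair and $D_{k+1}=D_k$. Beyond the orientation claim in the third paragraph, everything is a direct appeal to Propositions~\ref{parallellines} and~\ref{concurrent} and Lemmas~\ref{consecutivesides} and~\ref{which_vertical}.
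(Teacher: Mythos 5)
Your argument is correct, but it takes a genuinely different route from the paper's. The paper uses the same dictionary you start from (rightward motion of the parameter along $\overleftrightarrow{P_kP_{k+1}}$ corresponds to clockwise rotation of the line about $\el{k}\cap\el{k+1}$), but then argues dynamically: move $P$ to the right from $P_k$ and observe that the origin's side of the continuously rotating line can change only when the line passes over $O$, which happens exactly when $P$ must wrap around through infinity before reaching $P_{k+1}$, i.e., exactly when $P_{k+1}$ is to the left of $P_k$. You argue statically: the lines $L_Z$, $Z\in\overline{P_kP_{k+1}}$, fill the vertical-angle pair at the vertex not containing $O$ (a fact from the proof of Lemma~\ref{consecutivesides} that rests only on Proposition~\ref{concurrent}, so there is no circularity), the rotational sense of that sweep is read off from whether $P_{k+1}$ is left or right of $P_k$, an orientation computation identifies the counterclockwise rotation from $\el{k}$ to $\el{k+1}$ as the one sweeping the pair containing $\Delta$, and Lemma~\ref{which_vertical} (applied in the converse direction, which is legitimate since its two cases are exhaustive with mutually exclusive conclusions) converts the pair comparison into the claim about $D_k$ and $D_{k+1}$. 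The step you flag as delicate is indeed the crux, and it is true: for clockwise traversal with $\Delta$ on the right, $\vec v$ is the clockwise rotation of $\vec u$ by the exterior angle $\epsilon\in(0,\pi)$, so the counterclockwise sweep from the ray $-\vec u$ to the ray $\vec v$ has measure $\pi-\epsilon$, the interior angle, and hence is the interior sector; spelling this out is worthwhile because it is the exact counterpart of the direction-compatibility (that the travel directions on $\el{k}$ and $\el{k+1}$ correspond under the rotation) which the paper's four-sentence proof leaves implicit. The trade-off is length versus explicitness: the paper's continuity argument is shorter, while yours localizes the only orientation-sensitive content in one checkable claim and otherwise reuses Lemmas~\ref{which_vertical} and \ref{consecutivesides}.
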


\begin{proof}
Consider the line $\overleftrightarrow{P_{k}P_{k+1}}$.
As a point $P \in \coeffs$ travels from $P_k$ to the right along $\overleftrightarrow{P_{k}P_{k+1}}$, the line $L_P$ rotates clockwise. If $P_{k+1}$ is hit before $P$ wraps around to the opposite side, then $D_{k+1}$ will stay equal to $D_k$. As $P$ wraps around to the opposite side, $L_P$ in $\mathbb{R}^2$ passes over the origin. Therefore, if $P_{k+1}$ is to the left of $P_k$, then $D_{k+1} \neq D_k$. 
\end{proof}

\begin{figure}[ht!]
    \centering
    \includegraphics[width=11cm]{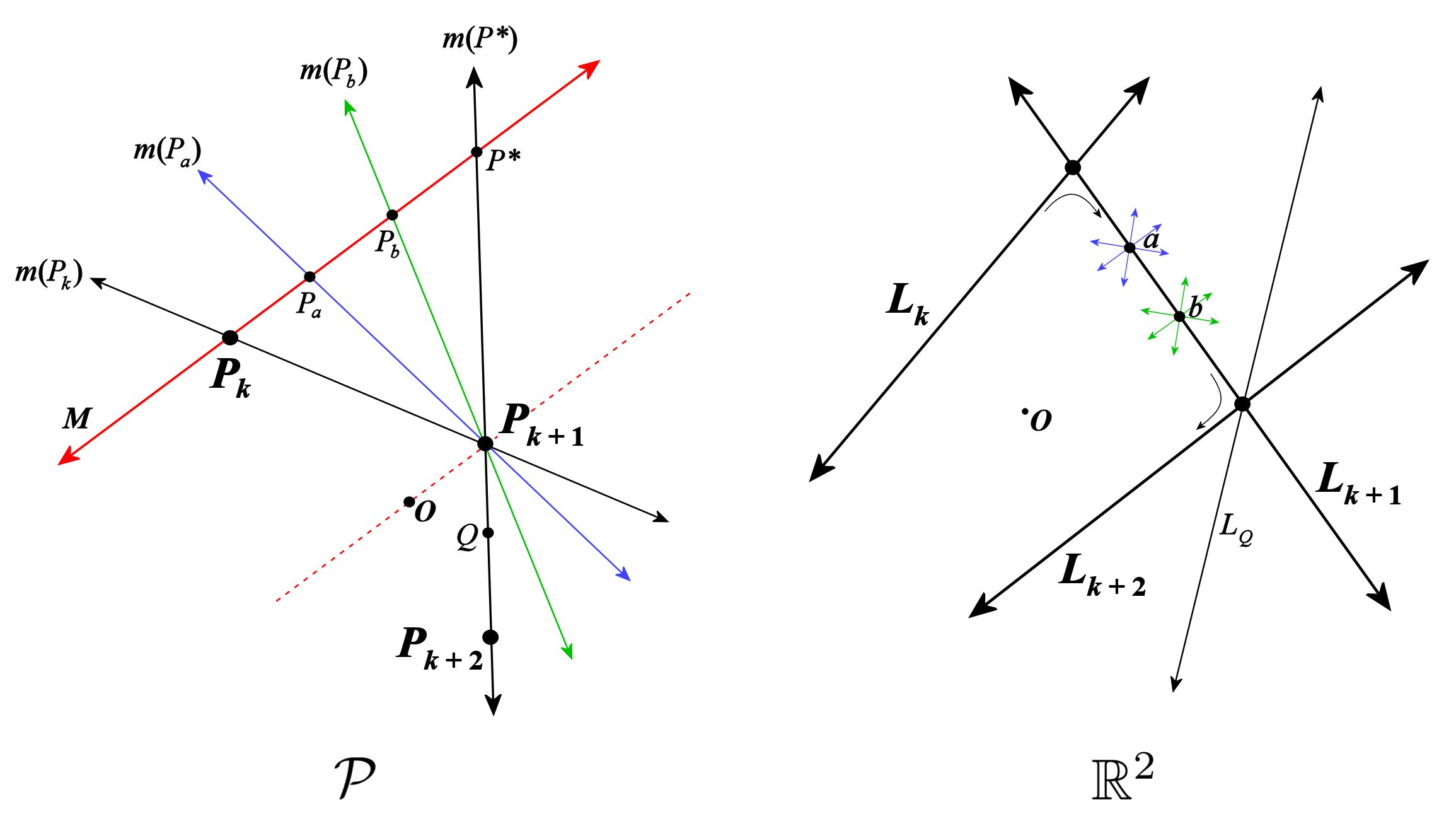}
    \caption{Illustration of our strategy to find the next side around a polygonal region (for the case $D_k=D_{k+1}=r$).  Points $a$ and $b$ are the points of concurrency of $\mathcal{L}_{m(P_a)}$ and $\mathcal{L}_{m(P_b)}$, respectively. (Not drawn to scale.)}
    \label{nextSideIllustrated}
\end{figure}

We now explain how to find $P_{k+2}$, given $\Delta$, $P_k$, and $P_{k+1}$.

For this discussion, please refer to Figure~\ref{nextSideIllustrated}. Our strategy for finding $P_{k+2}$ is to travel along $\el{k+1}$ to the next vertex of $\Delta$ (i.e., from $\el{k} \cap \el{k+1}$ to $\el{k+1} \cap \el{k+2}$). As we travel, we take note of the set of points in $\coeffs$ that corresponds to the lines through each position on $\el{k+1}$ that we pass through. We stop when this set of points contains at least one point in $S$ other than $P_{k+1}$. We then ascertain which of these points of $S$ corresponds to $P_{k+2}$.

By Proposition~\ref{concurrent}, we know that for any line $K \subset \coeffs$ through $P_{k+1}$, ${\mathcal L}_K$ corresponds to a set of lines in $\mathbb{R}^2$ that concur at a point on $\el{k+1}$. If $K = \overleftrightarrow{P_kP_{k+1}}$, then the point of concurrency of the lines in ${\mathcal L}_K$ is $\el{k} \cap \el{k+1}$. As $K$ rotates around $P_{k+1}$ away from $\overleftrightarrow{P_kP_{k+1}}$, the point of concurrency moves along $\el{k+1}$ away from $\el{k} \cap \el{k+1}$. As $K$ rotates toward $\overleftrightarrow{OP_{k+1}}$ from either side, the point of concurrency moves to infinity.

To be more precise, let $M$ be the line through $P_k$ that is parallel to $\overleftrightarrow{OP_{k+1}}$.
Note that $M$ does not contain $O$ because $\el{k}$ and $\el{k+1}$ intersect and so cannot be parallel, which means $\overleftrightarrow{P_kP_{k+1}}$ does not contain the origin (See Proposition~\ref{parallellines}).
Therefore, we can define a map $m : M \to \{ \text{lines in $\coeffs$ that contain $P_{k+1}$} \}$ by sending each $P \in M$ to $m(P) = \overleftrightarrow{PP_{k+1}}$.
Note that $m(P)$ parametrizes all the lines in $\coeffs$ that pass through $P_{k+1}$. As $P$ moves right along $M$, $m(P)$ rotates clockwise about $P_{k+1}$ and as $P$ tends to infinity in either direction, $m(P)$ tends to the line $\overleftrightarrow{OP_{k+1}}$.
Thus, as $P$ moves away from $P_k$, the point of concurrency of the lines in ${\mathcal L}_{m(P)}$ moves away from $\el{k} \cap \el{k+1}$, and as $P$ tends to infinity in either direction, the point of concurrency tends to infinity.

Starting at $P_k$, which way should $P$ move along $M$ so that the corresponding point of concurrency of the lines in ${\mathcal L}_{m(P)}$ moves toward the next vertex of $\Delta$?
The answer, as we shall show, is that $P$ should move along $M$ in the direction given by $D_{k+1}$.
We first note that by Definition~\ref{LPOrientation}, $\el{k+2}$ intersects $\el{k+1}$ on the origin side or the non-origin side of $\el{k}$ depending on whether $D_k$ is $r$ or $l$, respectively.
Therefore, $P$ must be moved so that the point of concurrency of the lines in ${\mathcal L}_{m(P)}$ are on the origin or the non-origin side of $\el{k}$ depending on whether $D_k$ is $r$ or $l$, respectively.
We can determine which side of $L_k$ the point of concurrency is on by looking at which line, parallel to $L_k$, passes through the point of concurrency, that is, looking at $L_{m(P) \cap \overleftrightarrow{OP_k}}$ (unless $m(P)$ is parallel to $\overleftrightarrow{OP_k}$, in which case the point of concurrency is where the line parallel to $\el{k}$ and through $O$ intersects $\el{k+1}$).
Therefore, when $P \neq P_k$, the point of concurrency is in the non-origin side of $\el{k}$ if $m(P) \cap \overleftrightarrow{OP_k} \in \overline{OP_k}$, and on the origin side of $\el{k}$ otherwise.
If $P_{k+1}$ is to the right of $P_k$, then $m(P) \cap \overleftrightarrow{OP_k} \in \overline{OP_k}$ if and only if $P$ is to the left of $P_k$.
If $P_{k+1}$ is to the left of $P_k$, then $m(P) \cap \overleftrightarrow{OP_k} \in \overline{OP_k}$ if and only if $P$ is to the right of $P_k$.
Since $m(P)$ never crosses over the origin, rightward motion along $m(P)$ corresponds to the direction from $P$ to $P_{k+1}$ if $P_{k+1}$ is to the right of $P_k$ and the direction from $P_{k+1}$ to $P$ if $P_{k+1}$ is to the left of $P_k$.

With these observations in mind, suppose $(D_k, D_{k+1}) = (r, r)$.
Then we must move $P$ so that the point of concurrency moves into the origin side of $L_k$.This means that we do not want $m(P) \cap \overleftrightarrow{OP_k} \in \overline{OP_k}$. Now $P_{k+1}$ cannot be to the left of $P_k$ because with $D_{k+1}=D_k=r$, we would have to be able to reach $P_{k+1}$ from $P_k$ by moving to the right. So $P_{k+1}$ is right of $P_k$, which means $P$ must move to the right from $P_k$, which corresponds to $D_{k+1}$.
A similar argument shows that $P$ moves from $P_k$ in the direction given by $D_{k+1}$ in all cases (see Figure~\ref{possibilities}).

We move $P$ until $m(P)$ first touches points of $S$ other than those on $\overleftrightarrow{P_kP_{k+1}}$. Let $P^*$ be the value of $P$ when this occurs.

\begin{figure}
\includegraphics[width=12cm]{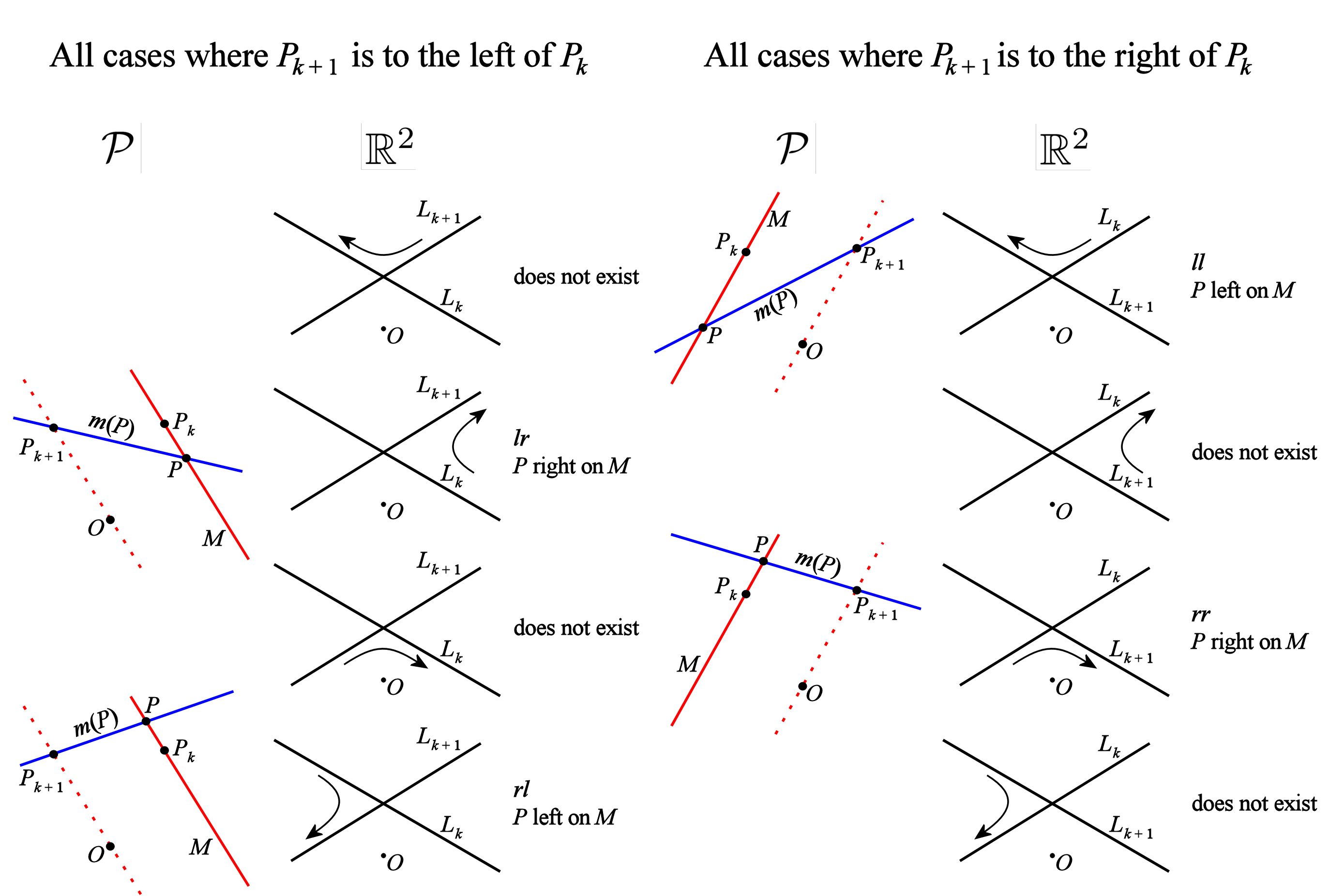}
\centering
\caption{All possible cases for going clockwise around the polygon from a side on $L_k$ to the next side on $L_{k+1}$. Next to each case we list $D_{k}$ and $D_{k+1}$ and indicate which direction $P$ must move to find the next vertex of $\Delta$. See the passage following the proof of Proposition~\ref{valueofd} for an explanation of this and the notation.}
\label{possibilities}
\end{figure}

Since $S$ is a finite set, there are finitely many points in $m(P^*) \cap S$. Let $Q_0$, $Q_1$, $Q_2$, $Q_3$, \dots, $Q_q$ be the points of $m(P^*) \cap S$ with $Q_0 = P_{k+1}$ and the $Q_i$ indexed in the order that the points appear on $m(P^*)$ starting at $Q_0$ and moving away from the direction of $P^*$, wrapping around to the opposite end of $m(P^*)$ if necessary.

Which of the points $Q_i$ corresponds to $P_{k+2}$? Since we are progressing around the boundary of $\Delta$ in the clockwise direction, when we reach $\el{k+1} \cap \el{k+2}$, the next side will be on the line $L_{Q_k}$ in which $Q_{k}$ is the last of the $Q_k$'s, $k>0$, that we encounter as we rotate away from $\el{k+1}$ in the clockwise direction.

Suppose $D_k=D_{k+1}$. In this case $P_{k+1}$ is to the right of $P_{k}$, so rightward motion along $m(P^*)$ corresponds to walking along $m(P^*)$ from $P_{k+1}$ away from $P^*$. Therefore, clockwise rotation about $\el{k+1} \cap \el{k+2}$ corresponds to walking along $m(P^*)$ away from $P^*$. Therefore, the next side will be $Q_q$.

Suppose $D_k \neq D_{k+1}$. In this case $P_{k+1}$ is to the left of $P_{k}$, so rightward motion along $m(P^*)$ corresponds to walking along $m(P^*)$ from $P_{k+1}$ toward $P^*$. Therefore, clockwise rotation about $X$ corresponds to walking along $m(P^*)$ toward $P^*$. Therefore the next side will be $Q_1$.

To summarize,

\begin{theorem}\label{multiplepoints}
In the notation set up in the preceding discussion, $P_{k+2} = Q_q$ if $D_k=D_{k+1}$ and $P_{k+2} = Q_1$ if $D_k \neq D_{k+1}$. 
\end{theorem}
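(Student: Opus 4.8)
The statement is really a summary of the geometric analysis carried out in the discussion immediately preceding it, so my plan is to assemble that analysis into a clean argument rather than to prove something from scratch. First I would fix the setup: recall that as $P$ travels along $M$ starting at $P_k$ in the direction dictated by $D_{k+1}$, the point of concurrency of $\mathcal{L}_{m(P)}$ slides along $L_{k+1}$ starting at $L_k\cap L_{k+1}$ and moving toward the origin side or non-origin side of $L_k$ exactly as needed to reach the next vertex $L_{k+1}\cap L_{k+2}$ of $\Delta$; this was justified case-by-case via Proposition~\ref{valueofd} and Figure~\ref{possibilities}. Let $P^*$ be the first value of $P$ at which $m(P)$ meets a point of $S$ not on $\overleftrightarrow{P_kP_{k+1}}$, and let $X = L_{k+1}\cap L_{k+2}$ be the corresponding vertex; I would argue that $X$ really is the \emph{next} vertex of $\Delta$ along $L_{k+1}$, because any line through an intermediate point of $L_{k+1}$ between $L_k\cap L_{k+1}$ and $X$ corresponds to a point of $\coeffs$ strictly between $P_k$ and $P^*$ on their respective transversal, and no such point lies in $S$ by minimality of $P^*$; if one did, its line would give an earlier vertex, contradicting that we are walking to the adjacent vertex.

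Next I would pin down the combinatorial matching. The lines $L_{Q_i}$ for $i=0,1,\dots,q$ are exactly the lines of $\mathcal{L}_S$ passing through $X$, with $L_{Q_0}=L_{k+1}$. Since $\Delta$ is convex and we are traversing its boundary clockwise with $\Delta$ on our right, the side of $\Delta$ following $L_{k+1}$ at the vertex $X$ lies on whichever of these lines is ``last'' encountered as we rotate a ray clockwise about $X$ starting from the direction of $L_{k+1}$ — this is just the statement that the polygon occupies the extreme wedge at $X$ and that no line of $\mathcal{L}_S$ through $X$ may cut into $\Delta$'s interior. So the problem reduces to translating ``last line encountered rotating clockwise about $X$'' into a statement about the ordering of the $Q_i$ along the line $m(P^*)$ in $\coeffs$. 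Here I invoke the remark after Proposition~\ref{concurrent}: as a point traverses a line in the coefficient plane from left to right, the associated concurrent lines rotate \emph{clockwise} about their common point. Combined with the earlier observation that rightward motion along $m(P^*)$ is the direction from $P_{k+1}$ away from $P^*$ when $D_k=D_{k+1}$ (because then $P_{k+1}$ is to the right of $P_k$), and the direction from $P_{k+1}$ toward $P^*$ when $D_k\neq D_{k+1}$ (because then $P_{k+1}$ is to the left of $P_k$), I get: clockwise rotation about $X$ corresponds to moving along $m(P^*)$ away from $P^*$ in the first case, and toward $P^*$ in the second.

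Finally I would conclude. Recall the $Q_i$ are indexed in the order they appear on $m(P^*)$ starting at $Q_0=P_{k+1}$ and moving \emph{away} from the direction of $P^*$ (wrapping around the point at infinity if necessary). When $D_k=D_{k+1}$, the ``last line encountered rotating clockwise about $X$'' is the one reached by moving along $m(P^*)$ away from $P^*$ as far as possible before wrapping past $\overleftrightarrow{OP_{k+1}}$ — but $m(P^*)$ never crosses the origin, so this is simply $Q_q$. When $D_k\neq D_{k+1}$, ``last, rotating clockwise'' corresponds to moving toward $P^*$, i.e.\ in the direction \emph{opposite} to the indexing, so the last line encountered is $Q_1$. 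Hence $P_{k+2}=Q_q$ in the first case and $P_{k+2}=Q_1$ in the second. I expect the main obstacle to be bookkeeping of orientations: keeping straight which direction along $m(P^*)$ is ``rightward,'' how that couples to clockwise rotation about $X$ via the remark after Proposition~\ref{concurrent}, and how the wraparound at the point at infinity interacts with the fact that $m(P)$ never sweeps across $O$ — getting a single sign wrong flips $Q_1$ and $Q_q$. I would handle this by carefully checking the one representative case $(D_k,D_{k+1})=(r,r)$ against Figure~\ref{nextSideIllustrated} and then noting the other three cases follow by the symmetry already recorded in Figure~\ref{possibilities}.
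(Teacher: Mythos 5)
Your proposal is correct and follows essentially the same route as the paper: the paper's "proof" of Theorem~\ref{multiplepoints} is exactly the preceding discussion (the map $m$, the choice of direction via $D_{k+1}$ and Proposition~\ref{valueofd}, the stopping point $P^*$, and the identification of clockwise rotation about the vertex with rightward motion along $m(P^*)$), which you reassemble with the same case split on whether $P_{k+1}$ is left or right of $P_k$. The extra points you make explicit (minimality of $P^*$ giving the adjacent vertex, and the "extreme wedge" argument for why the next side is the last line met under clockwise rotation) are faithful elaborations of the paper's reasoning rather than a different method.
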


\subsection{The Region Containing $O$.}

Let $S \subset \coeffs$ be a finite set.
In this section, we identify the points in $S$ that correspond to the sides of the region containing $O$.

\begin{definition} \label{sidesaroundo}
Let $O_k \in \coeffs$ be the points in $S$ that correspond to the sides of the region containing $O$ as encountered in the clockwise order. If the region containing $O$ is bounded, $O_k$ is defined up to translation of the indices. Otherwise, $O_k$ is indexed so that the unbounded sides correspond to the first and last terms of the sequence.
\end{definition}

The exact determination of $O_k$ depends on whether the region containing $O$ is bounded or not and on the dimension of the convex hull of $S$.
We shall now go through the various cases and explain precisely what $O_k$ is, and then prove that our prescription is correct in Proposition~\ref{regioncontainingo}.
Let $C$ be the convex hull of $S$. (Note that the vertices of $C$ are among the points of $S$.) 
We consider six cases:
\renewcommand{\labelenumi}{\roman{enumi}}
\begin{enumerate}
    \item $C$ is not a polygon.
    \begin{enumerate}
        \item $S = \{P\}$. Set $\{O_k\}$ to be the one-term sequence with $O_1 = P$.
        
        \item $C$ is a line segment of nonzero length and the line containing $C$ also contains the origin. If the origin is not interior to $C$, set $\{O_k\}$ to be the one-term sequence with $O_1$ being the point of $S$ farthest from the origin. If the origin is interior to $C$, set $\{O_k\}$ to be the two-term sequence with $O_1$ and $O_2$ being the endpoints of $C$ in either order.
        
        \item $C$ is a line segment of nonzero length and the line containing $C$ does not contain the origin. Set $\{O_k\}$ to be the two-term sequence consisting of the endpoints of $C$ labeled so that $O_2$ is to the right of $O_1$.
    \end{enumerate}
    \item $C$ is a polygon.
    \begin{enumerate}
        \item The interior of $C$ contains the origin (see Figure~\ref{ccontainso}). Set $O_1$, $O_2$, $O_3$, \dots, $O_N$ to be the vertices of $C$ in clockwise order, starting at an arbitrary vertex of $C$. 
        
        \item $C$ does not contain the origin. Then there are exactly two rays emanating from the origin that pass through the boundary of $C$ but not its interior. Label these rays $r_1$ and $r_2$ so that the points of $r_2 \setminus \{(0,0)\}$ are to the right of the points of $r_1 \setminus \{(0,0)\}$. Between $r_1$ and $r_2$, the boundary of $C$ contains two paths that both have one point on $r_1$ and one point on $r_2$. (Note that the union of these two paths may not comprise the entire boundary of $C$ if $r_1$ or $r_2$ intersects the boundary of $C$ in a line segment.) Each ray $\Delta$ from the origin that intersects the interior of $C$ intersects the boundary of $C$ in two points (because $C$ is convex), one farther from the origin than the other. Call the one of the two paths that contains the points on these rays farther from the origin the ``outer'' path. Set $O_1$ to be the point of $S$ which is the point on this outer path on $r_1$. As we travel from $O_1$ along the outer path, by construction, we will be going clockwise around $C$. Let $O_2$, $O_3$, \dots, $O_N$ be the vertices of $C$ as we encounter them, with $O_N$ being the point of the outer path on $r_2$. (Note that $C$ may have more than $N$ vertices.)
        
        \item  The origin is on the boundary of $C$. Set $O_1$, $O_2$, $O_3$, \dots, $O_N$ to be the vertices of $C$ as they are encountered if one travels once around the boundary of $C$ in the clockwise direction starting at the origin. 
    \end{enumerate}
\end{enumerate}

\begin{figure} 
    \centering
    \includegraphics[scale=0.5]{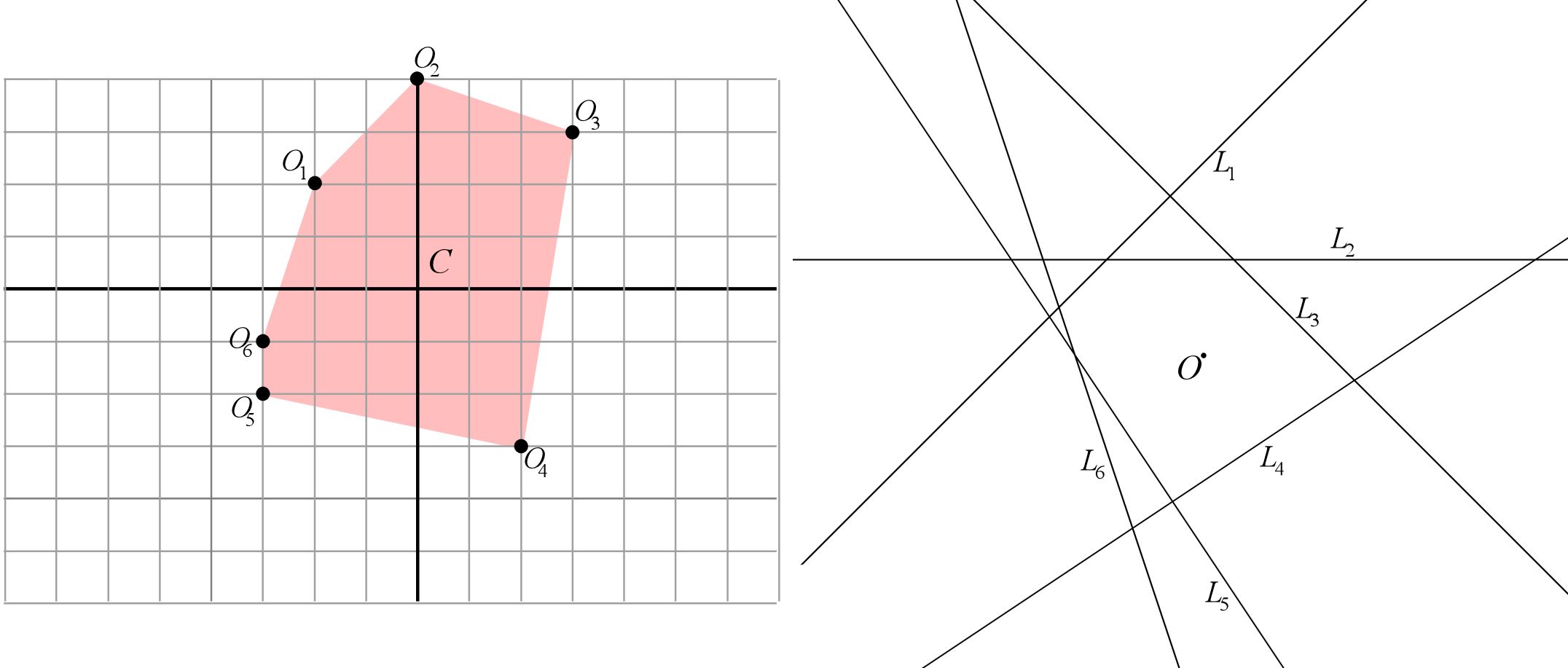}
    \caption{With notation as in Definition~\ref{sidesaroundo}, $S$ consists of the indicated $6$ points and $C$ is the convex hull of $S$. Here, $C$ contains the origin. Note that the region that contains the origin in the pattern formed by the lines of ${\mathcal L}_S$ is bounded.}
    \label{ccontainso}
\end{figure}

\begin{figure} 
    \centering
    \includegraphics[scale=.5]{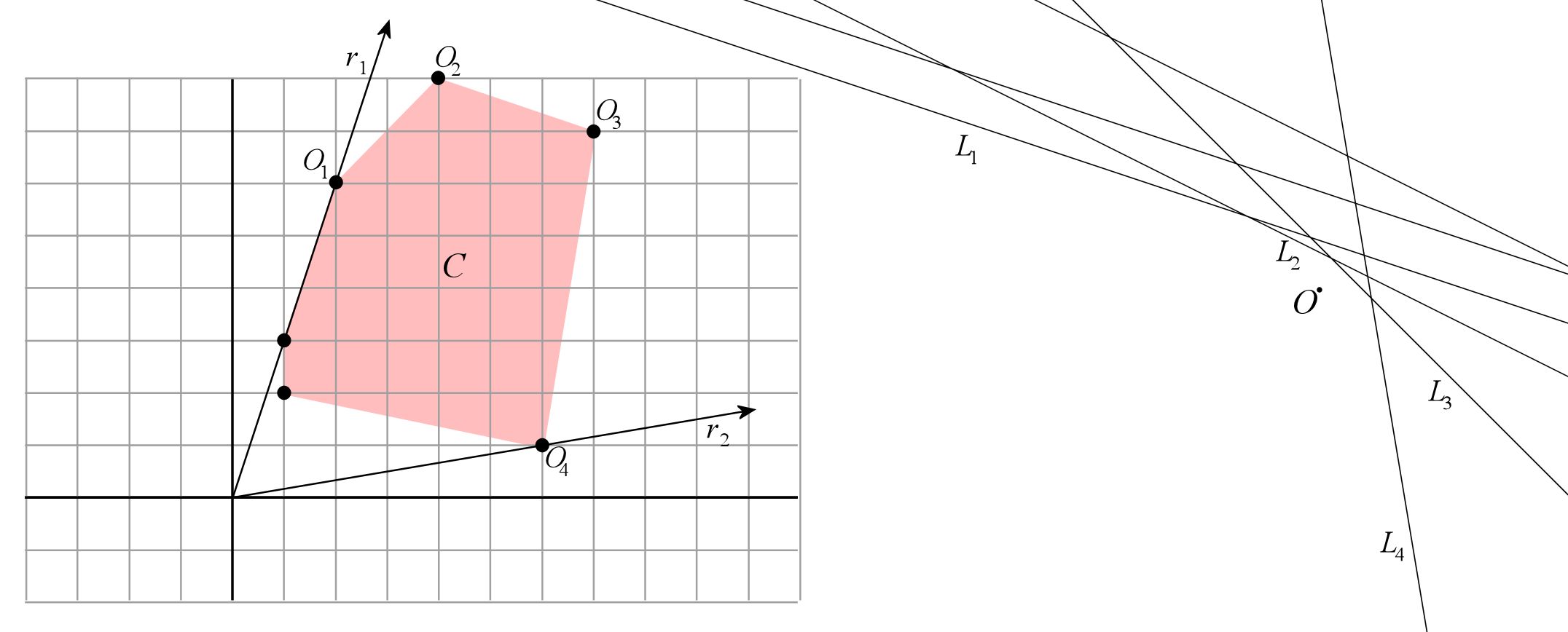}
    \caption{With notation as in Definition~\ref{sidesaroundo}, $S$ consists of the indicated $6$ points and $C$ is the convex hull of $S$. Here, $C$ does not contain the origin. Note that the region that contains the origin in the pattern formed by the lines of ${\mathcal L}_S$ is unbounded.}
    \label{notccontainso}
\end{figure}

\begin{proposition}\label{regioncontainingo}
Let $S \subset \coeffs$ be a finite set. Let $N$ be the length of the finite sequence $O_k$, as defined in Definition~\ref{sidesaroundo}. Let $\Delta$ be the region that contains $O$. The boundary of $\Delta$ is precisely given by the lines $L_{O_k}$, $1 \le k \le N$.
\end{proposition}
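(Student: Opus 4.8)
The plan is to verify, in each of the six cases of Definition~\ref{sidesaroundo}, that every line $L_{O_k}$ genuinely bounds the region $\Delta$ containing $O$, that no point of $S$ outside the listed $O_k$ contributes a side of $\Delta$, and that the clockwise cyclic (or linear, in the unbounded case) order of the $O_k$ matches the order in which the sides are encountered when walking clockwise around $\partial\Delta$. The unifying idea is Proposition~\ref{concurrent} together with Proposition~\ref{slopedistance}: a point $P\in S$ contributes a side of $\Delta$ exactly when $L_P$ is a supporting line of $\Delta$ at $O$'s side, and I expect to translate ``$L_P$ supports $\Delta$'' into a statement about $P$ being on the boundary of (a suitable portion of) the convex hull $C$ of $S$. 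More precisely, I would first establish the dictionary: for $P\in S$, the half-plane bounded by $L_P$ that contains $O$ is $\{(x,y): Ax+By<1\}$ where $P=(A,B)$; so $\Delta=\bigcap_{P\in S}\{Ax+By\le 1\}$ (closure), and $L_P$ is a side of $\Delta$ iff the inequality $Ax+By\le 1$ is not implied by the others, i.e. iff $P$ lies on the boundary of the polar-type region. By a standard duality argument, this boundary corresponds exactly to the ``lower'' part of $\partial C$ as seen from the origin — the outer path in case (ii)(b), all of $\partial C$ in case (ii)(a), and so on.

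I would organize the argument from the simplest cases up. Cases (i)(a)--(i)(c) are essentially direct computations: with one point, $\Delta$ is a half-plane with one boundary line; with a segment through $O$, the lines $L_P$ form a pencil of parallels (Proposition~\ref{parallellines}) and only the extreme ones (or, if $O$ is interior to $C$, the two endpoints) survive; with a segment not through $O$, one checks directly which two endpoints give the supporting lines and that $O_2$ right of $O_1$ gives clockwise order. For case (ii)(a), $C$ is a polygon containing $O$ in its interior: here by Proposition~\ref{slopedistance}(\textit{vi})/(\textit{vii}) and convexity, every vertex of $C$ yields a side of $\Delta$, every non-vertex point of $S$ lies in the interior of $C$ and hence its line does not support $\Delta$, and $\Delta$ is bounded; the clockwise order of vertices of $C$ maps to clockwise order of sides of $\Delta$ because the map $P\mapsto L_P$ reverses orientation in the precise sense noted after Proposition~\ref{concurrent} (the lines $L_P$ rotate clockwise as $P$ moves left-to-right), which I would make precise via the perpendicularity in Proposition~\ref{slopedistance}(\textit{i}). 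Case (ii)(c) with $O$ on $\partial C$ is handled like (ii)(a) but the starting/ending vertex is pinned at $O$, and $\Delta$ may be unbounded in the direction of the edge(s) of $C$ through $O$.

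The main obstacle I expect is case (ii)(b), where $C$ does not contain $O$. The subtlety is exactly the one the authors flag: when $r_1$ or $r_2$ meets $\partial C$ in a segment rather than a point, the two ``paths'' between $r_1$ and $r_2$ do not exhaust $\partial C$, and one must argue carefully that the sides of $\Delta$ correspond to the \emph{outer} path only, not the inner one. The cleanest route: observe that for a ray $\rho$ from $O$ meeting the interior of $C$, the \emph{farther} of its two intersection points with $\partial C$, call it $P$, has $\overleftrightarrow{OP}$ separating $C\setminus\{P\}$'s far portion... more usefully, a point $P\in S$ on the inner path has $O$ and $C$ on the \emph{same} side of $L_P$ only if... — actually the right statement is that $L_P$ supports $\Delta$ iff $P$ maximizes the linear functional $(A,B)\mapsto (A,B)\cdot \vec v$ over $S$ among all $P$ with $\vec v$ in a cone, which picks out the outer path. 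I would prove: $L_P$ is a side of $\Delta$ $\iff$ there is a direction $\vec v$ such that $P$ is the unique point of $S$ farthest in direction $\vec v$ \emph{and} $O$ is on the non-$\vec v$ side — then check this selects precisely $O_1,\dots,O_N$ as defined, with the outer path giving the correct clockwise traversal and $r_1,r_2$ producing the two unbounded sides (which is why Definition~\ref{sidesaroundo} puts them first and last). Showing that points on the inner path, and points of $S$ interior to $C$, are \emph{excluded} — i.e. their half-planes are redundant — is the delicate verification, and I would do it by exhibiting, for each such point, a point of $C$ strictly separating it from $O$ and invoking convexity to conclude redundancy of the corresponding inequality.
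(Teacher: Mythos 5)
Your route is genuinely different from the paper's. The paper proves the proposition by walking the boundary: it checks that consecutive $O_k$'s satisfy the hypotheses needed for Theorem~\ref{multiplepoints} (namely $\overleftrightarrow{O_kO_{k+1}} \cap S = \overline{O_kO_{k+1}} \cap S$ and $O_{k+1}$ to the right of $O_k$), so the next-side theorem shows the listed lines are traversed consecutively with $O$ always on the right, and then the ``origin switches sides exactly twice'' parity argument (as in Lemma~\ref{numberorient}) forces the traced region to be the one containing $O$; in cases ii(b--c) the walk is shown to escape to infinity along $L_{O_1}$ and $L_{O_N}$. You instead propose a polar-duality/redundancy argument: the closure of $\Delta$ is $\{x : \langle P, x\rangle \le 1 \ \forall P \in S\}$, and $L_P$ carries a side iff there is a direction $\vec v$ with $P$ the \emph{unique} maximizer of $\langle \cdot, \vec v\rangle$ over $S$ and $\langle P, \vec v\rangle > 0$. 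That identification of $\Delta$ and that criterion are correct (take $\vec v$ to be a relative-interior point of the putative side, suitably scaled, and conversely), so the plan is viable; it is conceptually cleaner about \emph{which} points contribute, but it must separately re-derive the clockwise ordering prescribed in Definition~\ref{sidesaroundo}, which the paper's walk produces for free.

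There are, however, concrete gaps as written. First, ``every non-vertex point of $S$ lies in the interior of $C$'' is false: a point of $S$ may lie in the relative interior of an edge of $C$; its constraint is then only \emph{weakly} redundant ($L_P$ meets the closure of $\Delta$ in a single point, by Proposition~\ref{concurrent} a vertex of $\Delta$), so it must be excluded via failure of unique maximization, not interiority. The same issue defeats your exclusion device in case ii(b): for such a point on an outer edge there is no point of $C$ radially beyond it, so ``a point of $C$ strictly separating it from $O$'' does not exist; radial dominance only disposes of interior points and inner-path points. Second, the positive half of the ii(b) check --- that the contact points $O_1$ and $O_N$ on $r_1$ and $r_2$ actually satisfy your criterion --- is exactly where the geometric content sits and is left as ``then check'': at such a point the direction perpendicular to $\overline{OO_1}$ lies in the normal cone but pairs to zero with $O_1$, so you must argue that the cone's interior still contains a direction pairing strictly positively with $O_1$ (using the second incident edge, including the case where $r_1$ meets $\partial C$ in a segment), and also that these two lines carry the two unbounded sides, matching the first/last placement in Definition~\ref{sidesaroundo}. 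Third, the matching of the clockwise order of the vertices of $C$ with the clockwise order of the sides of $\Delta$ is only gestured at via the remark after Proposition~\ref{concurrent}; since the definition prescribes the order, this needs an explicit argument. None of these seems fatal, but they are the delicate points, and until they are filled in the proposal is a plan rather than a proof.
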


\begin{proof}
We use the notation and case numbering from the discussion just after Definition~\ref{sidesaroundo}.

The cases i(a-c) may be directly verified. So assume that the convex hull of $S$ is a nondegenerate convex polygon.

Case ii(a). Let us regard the indices of the sequence $\{O_k\}$ modulo $N$. Note that $\overleftrightarrow{O_kO_{k+1}} \cap S = \overline{O_kO_{k+1}} \cap S$. Also, $O_{k+1}$ is to the right of $O_k$, by construction. Therefore, if one travels from $L_{O_k}$ onto $L_{O_{k+1}}$ in the clockwise direction, one will be walking around the boundary of some region $\Delta$. Furthermore, $O$ will be on the right as one walks along this part of the boundary of $\Delta$. By Theorem~\ref{multiplepoints}, we find that the next side of $\Delta$ encountered will indeed by $L_{O_{k+2}}$. Thus, the $\{O_k\}$ are the sides of a bounded region for which $O$ is always to the right as one travels around its boundary in the clockwise direction. Since clockwise travel around the boundary of any bounded convex region that does not contain $O$ must involve $O$ switching from right to left or left to right exactly twice, we conclude that $\Delta$ must be the region that contains $O$. 

Case ii(b-c). When $C$ does not contain the origin, the sequence $\{O_k\}$ must still form the boundary of some region $\Delta$ for which $O$ is always on the right as one travels around $\Delta$ in the clockwise direction. However, by construction, when one reaches $L_{O_N}$ from $L_{O_{N-1}}$, one will walk off to infinity without encountering any further intersection with lines of ${\mathcal L}_S$ (because, as in the discussion preceding Theorem~\ref{multiplepoints}, we would rotate $\overleftrightarrow{O_{N-1}O_N}$ about $O_{N}$ and pass over the origin before meeting another point in $S$). Similarly, by reversing the construction in the discussion preceding Theorem~\ref{multiplepoints}, we see that $L_{O_1}$ also has no further intersections with lines in ${\mathcal L}_S$ when one travels counterclockwise along it from the intersection of $L_{O_2}$ and $L_{O_1}$. Thus, the $\{O_k\}$ must correspond to the lines that form the boundary of the unbounded region which contains $O$ (because if $O$ were not in the region, $O$ must still switch to one's left at some point as one travels clockwise about the boundary of the region, even though the region is unbounded).
\end{proof}

\section{Rectangular Lattices}\label{rectangularlattices}
We define a ``rectangular lattice'' as follows.
\begin{definition}\label{rectanglelattice}
Let $(a, b) \in \coeffs$. Let $\Delta_x, \Delta_y \in {\mathbb R}_{>0}$. Let $N, M \in {\mathbb Z}_{\ge 0}$.
Assume in addition that the three corners $(a, b + M\Delta_y)$, $(a + N\Delta_x, b)$, and $(a + N\Delta_y, b + M\Delta_x)$ are all in $\coeffs$.
We define a rectangular lattice to be a subset $S$ of $\coeffs$ of the form: 
$$S \equiv \{(a + k\Delta_x, b + j\Delta_y) ~\vert~ 0 \le k \le N, 0 \le j \le M, k, j \in {\mathbb Z}_{\ge 0} \} \cap \coeffs.$$
\end{definition}
(The purpose of intersecting with $\coeffs$ is merely to eliminate the possibility of including the origin.) We use this definition of $S$ throughout this section.

We call a point of the form $(a + k\Delta_x, b + j\Delta_y)$ where $k = 0$, $k=N$, $j=0$, or $j=M$ a \textbf{boundary point} of $S$ and refer to the union of these points as the \textbf{boundary} of $S$.

\begin{lemma}\label{RRL}
Let $S \subset \coeffs$ be a rectangular lattice and let $\Delta$ be a bounded polygonal region created by the lines of ${\mathcal L}_S$ which does not contain $O$. Then no three consecutive $D_k$'s are equal.
\end{lemma}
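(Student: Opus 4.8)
The plan is to argue by contradiction. Suppose that for some bounded region $\Delta$ not containing $O$, three consecutive orientations agree, say $D_k = D_{k+1} = D_{k+2}$. By Proposition~\ref{valueofd}, since $D_{k+1}=D_k$ and $D_{k+2}=D_{k+1}$, the point $P_{k+1}$ is to the right of $P_k$ and $P_{k+2}$ is to the right of $P_{k+1}$; moreover, by Lemma~\ref{consecutivesides}, $\overleftrightarrow{P_kP_{k+1}}\cap S = \overline{P_kP_{k+1}}\cap S$ and $\overleftrightarrow{P_{k+1}P_{k+2}}\cap S = \overline{P_{k+1}P_{k+2}}\cap S$. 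So I would first extract the purely combinatorial content: three lattice points $P_k, P_{k+1}, P_{k+2}$ of the rectangular lattice $S$, consecutive as sides of $\Delta$, such that each of the two segments $\overline{P_kP_{k+1}}$ and $\overline{P_{k+1}P_{k+2}}$ contains no further point of $S$ beyond its endpoints even when extended to a full line, and the "turn" at $P_{k+1}$ is a right turn (as seen from $O$, going clockwise around $\Delta$). The hypothesis that $S$ is a full rectangular lattice with spacings $\Delta_x, \Delta_y$ should then force $P_{k+2}$ to actually be the wrong point — i.e., Theorem~\ref{multiplepoints} applied with $D_k=D_{k+1}$ picks out $Q_q$, the \emph{last} point on $m(P^*)$, and I claim the regularity of the lattice guarantees there is another collinear lattice point strictly beyond $P_{k+2}$ in the relevant direction, so that $Q_q \ne P_{k+2}$, a contradiction.

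More concretely, I would set up coordinates so that, after the linear change of variables of Proposition~\ref{lineartransformations} (which preserves the combinatorial arrangement and maps rectangular lattices to rectangular lattices in $\coeffs$), the lattice $S$ sits inside $\mathbb{Z}^2$ and $P_{k+1}$ is the origin of the lattice indexing. Write $P_{k+1}-P_k = (p,q)$ and $P_{k+2}-P_{k+1} = (p',q')$ in lattice coordinates. The "no interior lattice point on the segment or its extension" condition from Lemma~\ref{consecutivesides} says $\gcd(p,q)=1$ and $\gcd(p',q')=1$ \emph{and} that the full lattice line through $P_k, P_{k+1}$ meets $S$ only in $\overline{P_kP_{k+1}}$, i.e. $P_{k+1}+(p,q) = P_k + 2(p,q)$ lies outside the index rectangle $[0,N]\times[0,M]$, and similarly $P_{k+1}-(p',q')$ lies outside the rectangle. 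So $P_{k+1}$ is forced to be a boundary point of $S$ in a strong sense: stepping one unit (in the primitive direction) either way along either of the two side-directions leaves the rectangle. The key geometric step is then to show that, for a rectangle, a vertex $P_{k+1}$ with this property, together with the right-turn condition, is impossible unless $\Delta$ contains $O$: the two directions $(p,q)$ and $(p',q')$ would have to "wrap around" the rectangle in a way incompatible with both of the extended segments terminating immediately at $P_{k+1}$.

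I would execute this via a case analysis on which part of the boundary of the rectangle $P_{k+1}$ lies on (corner vs. edge), using the precise location of $O$ relative to $C = \operatorname{conv}(S)$ as organized in the six cases of Definition~\ref{sidesaroundo} and Proposition~\ref{regioncontainingo}. The cleanest route may be: among all sides of $\Delta$, the direction of the side-line $\overleftrightarrow{P_kP_{k+1}}$ as a line in $\coeffs$ corresponds (via Proposition~\ref{concurrent}) to the pencil of $\mathcal{L}_S$-lines through the vertex $\el{k}\cap\el{k+1}$; a right turn at $P_{k+1}$ combined with $D_k=D_{k+1}$ means that when we rotate the side-line about $P_{k+1}$ we do \emph{not} cross $O$, hence $P_{k+2}$ lies on the same side of $\overleftrightarrow{OP_{k+1}}$ as $P_k$. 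Then the three points $P_k, P_{k+1}, P_{k+2}$ all lie in a half-plane bounded by $\overleftrightarrow{OP_{k+1}}$, and I want to derive that the segment $\overline{P_kP_{k+1}}$ extended, or $\overline{P_{k+1}P_{k+2}}$ extended, must re-enter the index rectangle — because in a rectangular lattice, the only primitive lattice directions in which you can step from an \emph{interior or edge} point without leaving the rectangle are the axis directions, and the extremal configurations (corner points) cannot support a right turn with $O$ on the inside of the turn. The main obstacle I anticipate is bookkeeping the orientation ("right turn as seen from $O$") correctly across all positions of $P_{k+1}$ on the boundary and all positions of $O$ relative to $C$; getting the sign conventions right so that the contradiction with Theorem~\ref{multiplepoints} (selecting $Q_q$ rather than $P_{k+2}$) is airtight is where the real work lies, and it likely requires carefully invoking Lemma~\ref{numberorient} to control how many sign changes of $D$ can occur around $\Delta$.
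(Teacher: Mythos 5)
Your setup is the same as the paper's (reduce via Proposition~\ref{valueofd} and Lemma~\ref{consecutivesides}, then use the ``next side'' machinery of Theorem~\ref{multiplepoints}), but the decisive step is missing: you never actually prove that a rectangular lattice excludes the configuration. Everything after ``the key geometric step is then to show\dots'' is a plan rather than an argument, and you concede yourself that ``the real work'' of the orientation bookkeeping remains. Worse, several of the assertions the plan rests on are wrong. First, your ``cleanest route'' deduction is backwards: under your contradiction hypothesis, Proposition~\ref{valueofd} gives that $P_{k+1}$ is to the right of $P_k$ (equivalently $P_k$ is to the \emph{left} of $\overleftrightarrow{OP_{k+1}}$) while $P_{k+2}$ is to the \emph{right} of $\overleftrightarrow{OP_{k+1}}$; so the hypothesis puts $P_k$ and $P_{k+2}$ on \emph{opposite} sides, not the same side. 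What has to be proved (and what the paper proves) is that the lattice structure forces the true next point onto the same side as $P_k$, i.e.\ to the left, which is exactly what contradicts $D_{k+1}=D_{k+2}$ via Proposition~\ref{valueofd}; you cannot get it as a consequence of the hypothesis. Second, your case analysis on whether $P_{k+1}$ is a corner or edge point of the rectangle presumes $P_{k+1}$ lies on the boundary of the index rectangle, which does not follow: e.g.\ with index rectangle $[0,4]\times[0,2]$ and $P_k,P_{k+1}$ at index positions $(0,0)$ and $(3,1)$, the line through them meets $S$ only in those two points, yet $P_{k+1}$ is interior. Relatedly, the claim that the only primitive directions in which one can step from an interior or edge point without leaving the rectangle are the axis directions is false (step $(1,1)$ from an interior point). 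Third, Lemma~\ref{consecutivesides} in the case $D_k=D_{k+1}$ only excludes points of $S$ on $\overleftrightarrow{P_kP_{k+1}}$ \emph{outside} the segment; it does not exclude lattice points interior to the segment, so your $\gcd$ normalizations are not justified (a minor point, but symptomatic of over-reading the lemma).

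For comparison, the paper's proof does the lattice work explicitly: with $D_k=D_{k+1}=r$, it rotates the pencil of lines through $P_{k+1}$ (the map $m(P)$ from the discussion before Theorem~\ref{multiplepoints}) until it first meets a new point of $S$, splits that critical line into the parts $m_l$ and $m_r$ left and right of $\overleftrightarrow{OP_{k+1}}$, and shows $m_l\cap S\neq\emptyset$ by a reflection-through-$P_{k+1}$ argument: using the rectangular structure, any candidate point of $S$ on $m_r$ either forces an earlier hit (a point such as $(x,y_k')$) or reflects through $P_{k+1}$ to a point of $S$ on $m_l$, so the first new point, and hence $P_{k+2}$, lies on $m_l$ and $D_{k+2}=l$ by Proposition~\ref{valueofd}; the axis-parallel and lattice-corner configurations (where $\Delta$ would have to contain $O$) are treated separately, as is the $D_k=D_{k+1}=l$ case. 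None of this reflection/monotonicity argument, nor the boundary-case analysis, appears in your proposal, so as it stands it is an outline with incorrect load-bearing claims rather than a proof.
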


\begin{proof}
Assume $D_k = D_{k+1}$. We shall prove that $D_{k+1} \neq D_{k+2}$. We split into two cases, depending on whether $D_k=D_{k+1}$ is $r$ or $l$. We begin with the case $D_k=D_{k+1}=r$. By Proposition~\ref{valueofd}, $P_{k+1}$ is to the right of $P_k$.

By Lemma~\ref{consecutivesides}, $\overleftrightarrow{P_kP_{k+1}} \cap S = \overline{P_kP_{k+1}} \cap S$. Also, because $L_k$ and $L_{k+1}$ are consecutive sides of $\Delta$, they cannot be parallel, therefore the origin of the coefficient plane is not on $\overleftrightarrow{P_{k}P_{k+1}}$.

We use the notation as setup in the discussion of how to find the next side preceding Theorem~\ref{multiplepoints}. Thus, $m(P) = \overleftrightarrow{PP_{k+1}}$ where $P$ is on the line $M$ which passes through $P_k$ and is parallel to $\overrightarrow{OP_{k+1}}$.
According to our discussion there, since $D_{k+1} = r$, the next side around $\Delta$ is found by moving $P$ to the right along $M$ from $P_k$. Let $m$ be the first line $m(P)$ that hits another point in $S$ as $P$ move right from $P_k$. (In the notation preceding Theorem~\ref{multiplepoints}, we have $m = m(P^*)$.)
Because $\Delta$ is bounded, $m(P)$ will not pass through the origin as $P$ travels from $P_{k+1}$ to $P^*$. 
Let $m_r$ be the portion of $m$ to the right of $P_{k+1}$ and let $m_l$ be the portion of $m$ to the left of $P_{k+1}$.

We will show that $D_{k+2} = l$.

Let $P_k = (x_k, y_k)$ and $P_{k+1} = (x_{k+1}, y_{k+1})$.
Let $P_k' = (x_k', y_k')$ be the reflection of $P_k$ over $P_{k+1}$ (i.e., $x_k' = 2x_{k+1}-x_k$ and $y_k' = 2y_{k+1}-y_k$). Note that because $\overleftrightarrow{P_kP_{k+1}} \cap S = \overline{P_kP_{k+1}} \cap S$, $P_k'$ is not in $S$. Also, $P_k' \neq O$, since $\el{k}$ and $\el{k+1}$ are not parallel.

We now show that there exists points of $S$ on $m_l$ by contradiction. Suppose there are no such points. Then there must exist a point $(x, y)$ in $m_r \cap S$.

Let us first consider the case where $y_{k+1} < y_k$ and $x_{k+1} > x_k$. A representation of this is shown in Figure~\ref{xyproof}.

We claim that $(x_{k+1}, y_k) \in S$. By definition of rectangular lattice, the only way this would not be the case is if $(x_{k+1}, y_k) = O$, but this is not the case since $P_{k+1}$ is to the right of $P_k$.
Therefore, as $P$ moves from $P_k$ to $P^*$, $m(P)$ will not rotate beyond the vertical. If $m$ is vertical, then $(x_{k+1}, y_k) \in m_l \cap S$. So assume $m$ is not vertical and let $(x, y)$ be a point on $m_r$.  Then, $x > x_{k+1}$. 

Note that as $P$ moves from $P_{k}$ to $P^*$, $m(P)$ rotates clockwise because $P^*$ is to the right of $P_{k}$. Since $m$ was obtained by a clockwise rotation of $m(P)$, we cannot have both $x \ge x_k'$ and $y \ge y_k'$. If $(x, y)$ satisfies $x \ge x_k'$ and $y \le y_k'$, it is not in $S$, for otherwise, $P_k' \in S$, a contradiction. If $x < x_k'$ and $y < y_k'$, then $(x, y_k')$ is in $S$. When $m(P)$ rotates clockwise, $(x, y_k')$ would be hit before $(x, y)$, therefore $(x, y)$ cannot be on $m_r$. 

Therefore any point $(x, y)$ in $S$ on $m_r$ other than $P_{k+1}$ must satisfy $y \ge y_k'$ and $x \le x_k'$ (with not both being equalities). But the reflection of these points over $P_{k+1}$ are points on $m_l$ that are also in $S$, a contradiction.

The cases where $x_{k+1} < x_k$ and $y_{k+1} < y_k$, or where $x_{k+1} < x_k$ and $y_{k+1} > y_k$, or where $x_{k+1} > x_k$ and $y_{k+1} > y_k$ follow by entirely analogous proofs.

\begin{figure}
    \centering
    \includegraphics[width=8cm]{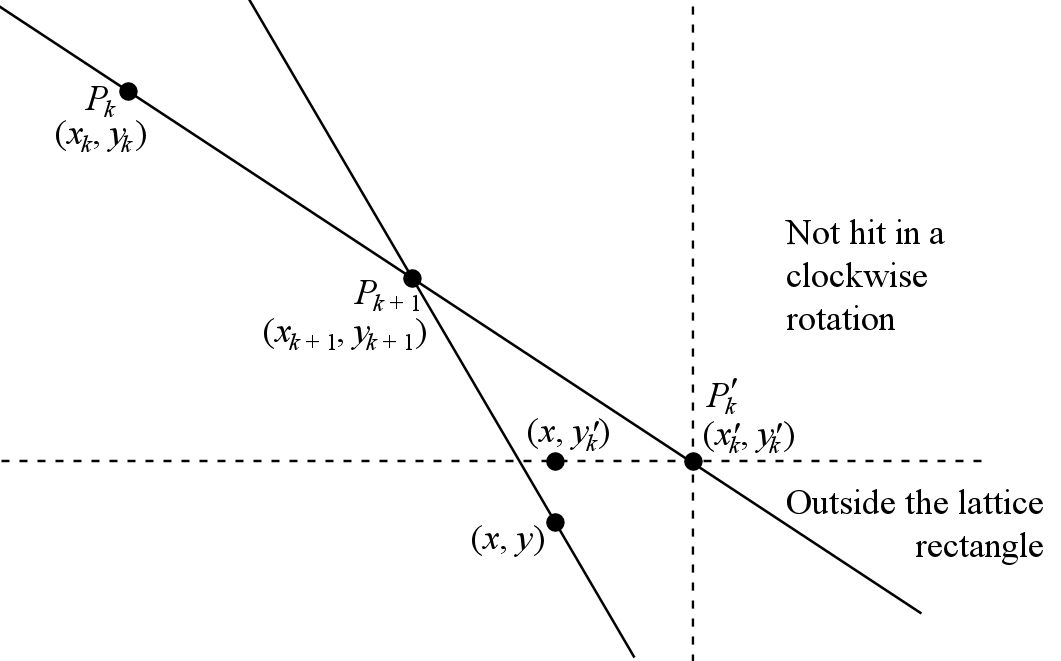}
    \caption{A clockwise rotation of $\protect\overleftrightarrow{P_kP_{k+1}}$ around $P_{k+1}$ until it reaches $(x,y)$ where $y_{k+1} < y_k$ and $x_{k+1} > x_k$ and $P_k' \notin S$. Note that if $(x,y) \in S$ then $(x,y_k')$ is also in $S$.}
    \label{xyproof}
\end{figure}

Now, we consider the cases where $x_k = x_{k+1}$ or $y_k = y_{k+1}$.

Because $\overleftrightarrow{P_kP_{k+1}} \cap S = \overline{P_kP_{k+1}} \cap S$, we know that $P_k$ and $P_{k+1}$ must be on opposite sides of the boundary of $S$. Therefore there exist no points in $S$ on $m_r$ other than $P_{k+1}$, unless $P_k$ and $P_{k+1}$ are corners of the boundary of $S$.

So suppose $P_k$ and $P_{k+1}$ are corners of the boundary of $S$. Note that $S$ is not a subset of a line, for if it were, all the regions formed by ${\mathcal L}_S$ would be unbounded (since all its lines are concurrent). Also, $P_k$ and $P_{k+1}$ do not correspond to opposite corners of the boundary of $S$ since they share a horizontal or vertical component. As we rotate $\overleftrightarrow{P_kP_{k+1}}$, the line will either rotate through $90^\circ$ or will rotate through less than $90^\circ$ before hitting another point of $S$. In the latter case, $m_r \cap S = \emptyset$, so $m_l \cap S$ must not be empty.
In the former case, $m_l \cap S = \emptyset$, and $m_r$ consists of the points of the boundary of $S$ that form the side that contains $P_{k+1}$ other than $\overleftrightarrow{P_kP_{k+1}}$.  In the former case, according to our procedure for finding the next side, $P_{k+2}$ will be the next corner of the boundary of $S$ and $P_{k+3}$ will be the final corner of the boundary of $S$ (assuming that $\Delta$ is bounded). Then the $P_i$ are among the $O_j$, and $\Delta$ contains $O$. (Note that for $\Delta$ to be a bounded region containing $O$, the origin of of the coefficient plane must be contained within the boundary of $S$.)

Thus, if $\Delta$ is bounded but does not contain $O$, there will be a point in $S$ other than $P_{k+1}$ on $m_l$, by Theorem~\ref{multiplepoints}, $P_{k+2}$ will be on $m_l$. By Proposition ~\ref{valueofd}, since $m_l$ is to the left of $\overleftrightarrow{OP_{k+1}}$, we know that $D_{k+2} = l$.

Now, consider the case where $D_k=D_{k+1}=l$. The proof that $D_{k+2}=r$ is similar to the case where $D_k=D_{k+1}=r$ (up to the point where $P_k$ and $P_{k+1}$ are corners of the boundary of $S$) with a few key differences which we point out here:

First, to find $m$, we would move $P$ from $P_k$ to the left along $M$, which corresponds to counterclockwise rotation of $m(P)$ instead of clockwise rotation.

Then, where we considered $y_{k+1} < y_k$ and $x_{k+1} > x_k$, we replace the case where both $x < x_k'$ and $y < y_k'$ with the condition that both $x > x_k'$ and $y > y_k'$. If $x > x_k'$ and $y > y_k'$, then $(x_k', y)$ is in $S$. When $m(P)$ rotates counterclockwise, $(x_k', y)$ would be hit before $(x, y)$, therefore $(x, y)$ cannot be on $m_r$.
We also replace the case where $x \ge x_k'$ and $y \ge y_k'$ with $x \le x_k'$ and $y \le y_k'$.

We now pick up the proof of the case where $D_k=D_{k+1}=l$ supposing that $P_k$ and $P_{k+1}$ are consecutive corners of the boundary of $S$. 

As we move $P$ from $P_k$ to $P^*$, $m(P)$ will rotate counterclockwise either through $90^\circ$ or through less than $90^\circ$ before hitting another point of $S$. In the former case, $m_l \cap S = \emptyset$, and $m_r$ consists of the points of the boundary of $S$ that form the side that contains $P_{k+1}$ other than $\overleftrightarrow{P_kP_{k+1}}$. 
The situation here differs from the case where $D_k=D_{k+1}=r$ in that the bounded case does not actually exist because, if $\Delta$ is bounded, the origin of $\coeffs$ must be contained within the boundary of $S$.
However, if this is true, only a rotation clockwise would cause $\overleftrightarrow{P_kP_{k+1}}$ to rotate through $90^\circ$, so $D_k=D_{k+1} \neq l$.
In the latter case, if $\Delta$ is bounded, then $m_r \cap S = \emptyset$.
\end{proof} 

\begin{theorem}\label{SandT}
The set of lines ${\mathcal L}_S$ forms no polygons with more than $4$ sides.
\end{theorem}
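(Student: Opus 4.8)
The plan is to partition the bounded regions of ${\mathcal L}_S$ into two classes — the unique region that contains $O$ and all the others — and to bound the number of sides of each class by $4$. Before that, I would clear away the degenerate possibilities for the convex hull $C$ of $S$. If $S$ is a single point then ${\mathcal L}_S$ is a single line; if $S$ lies on a line through the origin of $\coeffs$ then the lines of ${\mathcal L}_S$ are all parallel (Proposition~\ref{parallellines}); and if $S$ lies on a line of $\coeffs$ not through the origin then the lines of ${\mathcal L}_S$ are all concurrent (Proposition~\ref{concurrent}). In each of these cases ${\mathcal L}_S$ has no bounded region, so there is nothing to prove, and I may henceforth assume $C$ is a non-degenerate convex polygon. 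Since $S$ is a rectangular lattice, $C$ is then a rectangle, so $C$ has at most $4$ vertices.

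Next I would handle the region $\Delta_O$ containing $O$. Only bounded regions matter, so suppose $\Delta_O$ is bounded. Examining the case analysis preceding Proposition~\ref{regioncontainingo}, a bounded $\Delta_O$ can only arise in Case ii(a): in Cases i(a)--i(c) there is no bounded region at all, and in Cases ii(b)--ii(c) the region containing $O$ is unbounded. In Case ii(a), Proposition~\ref{regioncontainingo} tells us that the sides of $\Delta_O$ are among the lines $L_{O_1},\dots,L_{O_N}$ with $O_1,\dots,O_N$ the vertices of $C$; since $C$ is a rectangle, $N \le 4$, so $\Delta_O$ has at most $4$ sides.

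Now let $\Delta$ be any bounded region other than $\Delta_O$. Then $\Delta$ does not contain $O$, and by Proposition~\ref{region} it is a convex polygon, with $n \ge 3$ sides and associated orientation sequence $D_1,\dots,D_n \in \{l,r\}$. By Lemma~\ref{numberorient}, since $\Delta$ does not contain $O$, there are exactly two indices $k \pmod n$ at which $D_k \ne D_{k+1}$; equivalently, read cyclically, the word $D_1\cdots D_n$ is the concatenation of exactly two maximal constant runs, one of $l$'s and one of $r$'s. By Lemma~\ref{RRL} no three consecutive $D_k$'s are equal, so each of those two runs has length at most $2$. Therefore $n \le 2+2 = 4$, i.e., $\Delta$ is a triangle or a quadrilateral. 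Together with the previous paragraph, this shows every bounded region of ${\mathcal L}_S$ has at most $4$ sides, which is the assertion of the theorem.

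I do not expect a genuine obstacle here: all the hard geometry is already packaged in Lemma~\ref{RRL} (equivalently, the ``key observation'' from the introduction), and given that lemma the theorem reduces to the short counting argument above. The only points demanding a little care are the side issues — verifying that the degenerate hulls contribute no bounded regions, and confirming that a bounded $\Delta_O$ forces Case ii(a) so that the rectangle's (at most) four vertices bound it — but these are routine.
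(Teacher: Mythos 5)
Your proof is correct and follows essentially the same route as the paper: it bounds the bounded regions not containing $O$ by combining Lemma~\ref{numberorient} (exactly two changes in the $D_k$'s) with Lemma~\ref{RRL} (no three consecutive equal $D_k$'s), and bounds the region containing $O$ via Proposition~\ref{regioncontainingo} together with the fact that the relevant $O_k$ are among the (at most four) corners of the lattice. Your only additions are the explicit dismissal of degenerate hulls and the reduction of a bounded origin region to Case ii(a), which the paper leaves implicit; just note that if the origin happens to be a corner of the grid then $C$ is a pentagon rather than a rectangle, but then $O$ lies outside $C$ and its region is unbounded, so your argument is unaffected.
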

\begin{proof}
Let $\Delta$ be a polygon in the pattern of lines formed by ${\mathcal L}_S$ which does not contain the origin. By Lemma~\ref{numberorient}, the values $D_k$ will change only twice for any given polygon in $\mathbb{R}^2$. 

By Lemma~\ref{RRL}, if $\Delta$ is bounded and does not contain $O$, no more than two consecutive $D_k$'s can be equal, so there can be only a maximum of four $D_k$'s and hence a maximum of four sides to $\Delta$.

Now consider the polygon formed by the lines of $L_S$ that does contain the origin. By Proposition~\ref{regioncontainingo}, in a rectangular lattice, the points in $\coeffs$ that correspond to the boundary lines of the region that contains $O$ are a subset of the corners of the boundary of the lattice, so the region containing $O$ has a maximum of four sides. 
\end{proof}

\begin{remark} \label{pentagon}
We note that this result does not work for general arrays in $\coeffs$ as the set
$$S = \{(-2,-2), (-2, 2), (-2, 3), (2,-2), (2,2), (2,3), (3,-2), (3,2), (3,3)\}$$
is a $3$-by-$3$ array where ${\mathcal L}_S$ contains a pentagon. The regularity of rectangular lattices matters.
\end{remark}

\begin{proposition} \label{main2}
Let $S^{\prime}$ be the image of $S$ under a non-degenerate linear transformation. Then the set of lines ${\mathcal L}_{S^{\prime}}$ forms no polygons with more than $4$ sides.
\end{proposition}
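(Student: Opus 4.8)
The plan is to reduce the statement to Theorem~\ref{SandT} by observing that passing from $S$ to $S'$ corresponds, on the Euclidean side, to applying a single non-degenerate linear transformation to the \emph{entire} arrangement $\mathcal{L}_S$, and such a transformation preserves all the incidence and boundedness data that determine the number of sides of a region. Concretely, write $S' = MS$ for a non-degenerate linear map $M:\mathbb{R}^2\to\mathbb{R}^2$. First I would invoke Proposition~\ref{lineartransformations}, which gives $\mathcal{L}_{S'} = \mathcal{L}_{MS} = \{(M^t)^{-1}L_P \mid P\in S\}$. Setting $T = (M^t)^{-1}$, itself a non-degenerate linear map of $\mathbb{R}^2$, this says exactly that $\mathcal{L}_{S'}$ is the image of $\mathcal{L}_S$ under $T$, line by line.

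Next I would record the elementary facts about $T$: being an invertible linear (hence affine) bijection of the plane, $T$ is a homeomorphism that carries lines onto lines, preserves collinearity and concurrence, and sends bounded sets to bounded sets. Consequently $T$ restricts to a bijection between the connected components of $\mathbb{R}^2\setminus\bigcup\mathcal{L}_S$ and those of $\mathbb{R}^2\setminus\bigcup\mathcal{L}_{S'}$, matching bounded regions with bounded regions. Moreover, if $\Delta$ is a bounded region of $\mathcal{L}_S$ with $n$ sides -- so its boundary decomposes into $n$ maximal segments, each lying on a distinct line $L_P\in\mathcal{L}_S$ -- then $T(\Delta)$ is a bounded region whose boundary decomposes into the $n$ images of those segments, each a maximal segment on the distinct line $T(L_P)\in\mathcal{L}_{S'}$; hence $T(\Delta)$ has exactly $n$ sides. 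Thus the two arrangements realize the same multiset of polygon side-counts, and by Theorem~\ref{SandT} every bounded region of $\mathcal{L}_S$ -- hence every bounded region of $\mathcal{L}_{S'}$ -- has at most $4$ sides.

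There is no deep obstacle here; the only points needing care are (a) applying Proposition~\ref{lineartransformations} with the correct transformation $(M^t)^{-1}$ rather than $M$ itself, which is immaterial since both are non-degenerate, and (b) checking that ``number of sides'' is genuinely invariant under $T$, i.e.\ that $T$ neither merges two sides into one nor introduces a new vertex. Injectivity of $T$ together with the fact that it maps lines bijectively onto lines settles both: distinct bounding lines remain distinct, and a subset of a line is a maximal boundary segment of $\Delta$ if and only if its image is a maximal boundary segment of $T(\Delta)$.
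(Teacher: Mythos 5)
Your proposal is correct and takes essentially the same route as the paper: the paper's proof is just the one-line reduction via Proposition~\ref{lineartransformations}, identifying $\mathcal{L}_{S'}$ with the image of $\mathcal{L}_S$ under the invertible map $(M^t)^{-1}$ and appealing to Theorem~\ref{SandT}. You simply spell out the (correct) verification that such a transformation preserves bounded regions and their side counts, which the paper leaves implicit.
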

\begin{proof}
This proposition follows from Proposition~\ref{lineartransformations}. 
\end{proof}

\section{Further Discussion} \label{conclusion}
There seem to be interesting connections to number theory which may be worth further exploration. For example, if $S = \{(a,b) \in {\mathbb{Z}}^2~|~ -n \le a,b \le n\} \cap \coeffs$, for some positive integer $n$, then the number of two-sided unbounded regions in ${\mathcal L}_S$ is equal to $\#\{(a, b) \in S ~|~ \text{$a$ and $b$ are relatively prime}\}$.

Unlike the case with a rectangular lattice, there do exist $S \subset \coeffs$ such that $D_k=D_{k+1}$ does not imply $D_{k+1}\neq D_{k+2}$ but such that the polygons formed by the lines in ${\mathcal L}_S$ are all triangles and quadrilaterals. Figure~\ref{notsaturated} shows an example. 

\begin{figure}
    \centering
    \includegraphics[width=6cm]{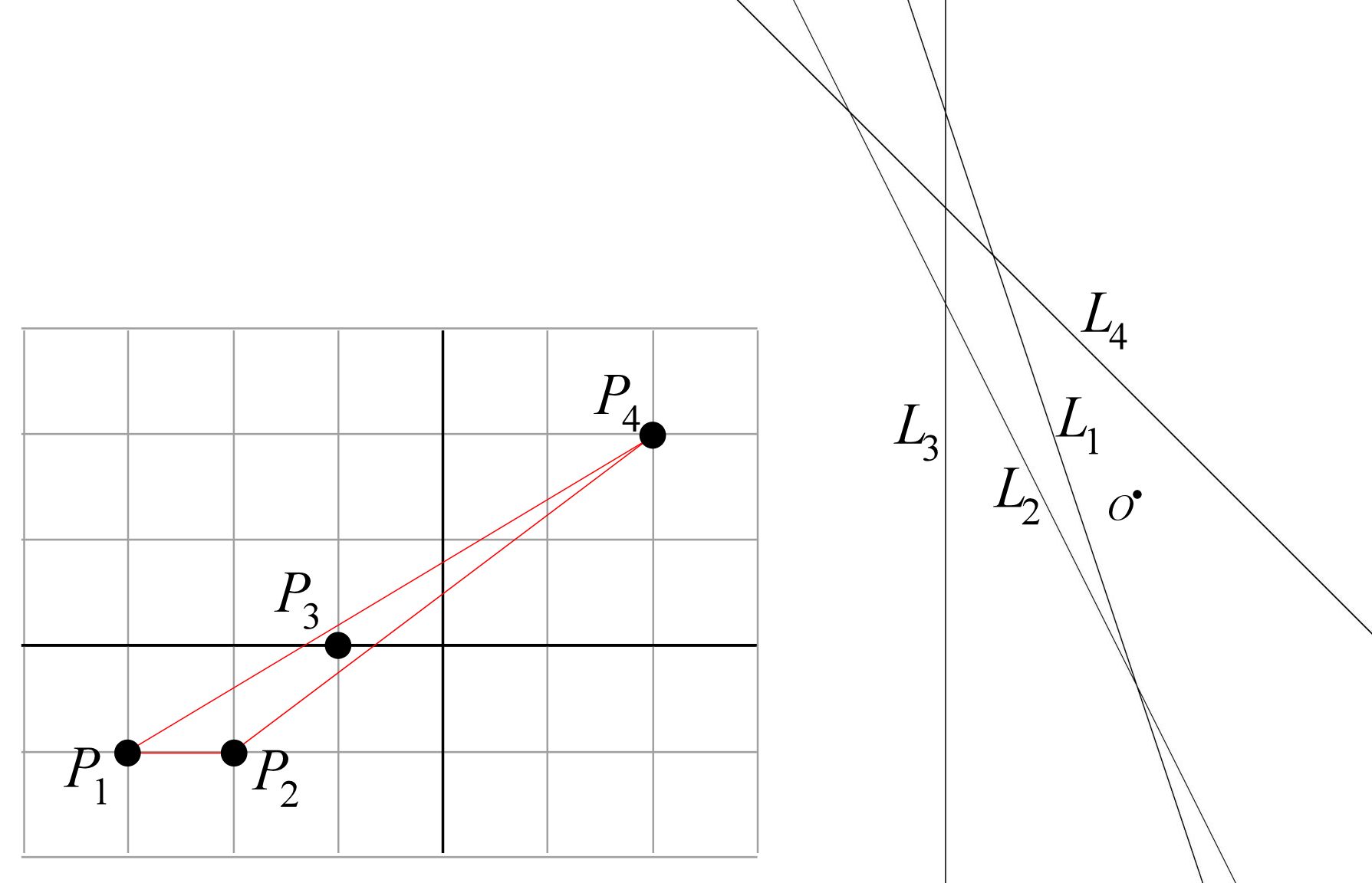}
    \caption{Lattice points on the boundary or inside the triangle whose vertices are $P_1=(-3,-1)$, $P_2=(-2,-1)$, and $P_4 = (2,2)$ and the corresponding pattern of lines. Even though these $4$ lattice points are the intersection of the lattice with a convex polygon, we have $D_2 = D_3 = D_4 = r$.}
    \label{notsaturated}
\end{figure}

Still, one might ask: if the lattice is intersected with any convex polygonal region, does the corresponding pattern of lines bound polygons that are all triangles and quadrilaterals? The answer is no, as shown by the counterexample in Figure~\ref{trianglecounterexample}. 

In fact, there is an $(n+2)$-sided polygon formed by the lines corresponding to the lattice points inside or on the boundary of the triangle whose vertices are given by $(-3, -1)$, $(-2, -1)$, and $(-3+F_{2n+1}, -1+F_{2n})$, where $F_n$ is the Fibonacci sequence with $F_1=F_2=1$. This example is based on geometrical properties of convergents of the continued fraction expansion of the golden mean. Figures~\ref{notsaturated} and \ref{trianglecounterexample} correspond to the cases $n=2$ and $n=3$, respectively.

\begin{figure}
    \centering
    \includegraphics[width=9.5cm]{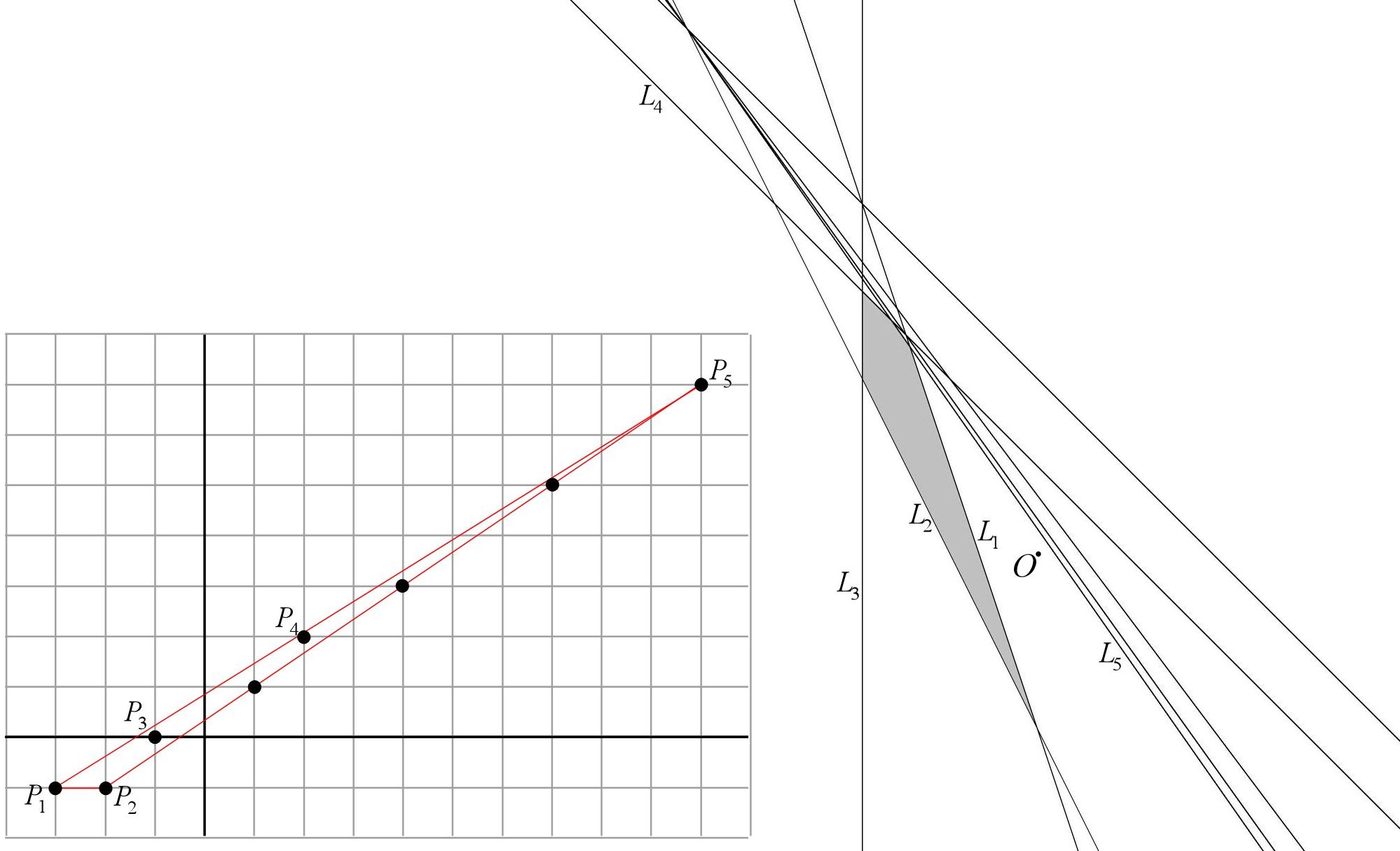}
    \caption{Lattice points on the boundary or inside the triangle with vertices $P_1 = (-3, -1)$, $P_2=(-2,-1)$, and $P_5=(10,7)$ together with the corresponding pattern of lines. In this figure, $L_k = L_{P_k}$. We get the shaded pentagon.}
    \label{trianglecounterexample}
\end{figure}

\section{Acknowledgements}
The authors would like to thank C.~Kenneth~Fan for his assistance in obtaining the results of the paper as well as helping to create and edit this paper.

\bibliographystyle{plain}
\bibliography{5+gon_free_MH_IL_arxiv_111623} 

\begin{thebibliography}{1}

\bibitem{lines}
Howard Anton and Chris Rorres.
\newblock {\em Elementary Linear Algebra}.
\newblock Wiley, Hoboken, NJ, 2014.

\bibitem{Grunbaum}
B.~Grunbaum.
\newblock {\em Arrangements and Spreads}.
\newblock American Mathematical Society, Providence, RI, 1972.

\bibitem{Leanos2007}
Jesús Leaños, Mario Lomelí, Criel Merino, Gelasio Salazar, and Jorge
  Urrutia.
\newblock Simple euclidean arrangements with no ($\geq$ 5)-gons.
\newblock {\em Discrete \& Computational Geometry}, 38:595--603, 10 2007.

\bibitem{convex}
I.E. Leonard and J.E. Lewis.
\newblock {\em Geometry of Convex Sets}.
\newblock Wiley, Hoboken, NJ, 11 edition, 2015.

\end{thebibliography}

\end{document}